\newcounter{prob}
\title[Improved homological stability after inverting $2$]{Improved homological stability for configuration spaces after inverting $2$}
\author{Alexander Kupers}
\thanks{Alexander Kupers is supported by a William R. Hewlett Stanford Graduate Fellowship, Department of Mathematics, Stanford University, and was partially supported by NSF grant DMS-1105058.}
\author{Jeremy Miller}
\date{\today}
\newcommand{\Q}{\mathbb{Q}}
\newcommand{\Z}{\mathbb{Z}}
\newcommand{\R}{\mathbb{R}}
\newcommand{\m}{\longrightarrow}
\newcommand{\F}{\mathbb{F}}
\newcommand{\G}{\mathcal{G}}
\newcommand{\mr}[1]{{\rm #1}}
\newtheorem{theorem}{Theorem}[section]
\newtheorem{lemma}[theorem]{Lemma}
\newtheorem{proposition}[theorem]{Proposition}
\newtheorem{remark}[theorem]{Remark}
\newtheorem{definition}[theorem]{Definition}
\begin{document}

\begin{abstract}In Appendix A of \cite{Se}, Segal proved homological stability for configuration spaces with a stability slope of $1/2$. With rational coefficients, this was later improved to a slope of $1$ by Church for orientable manifolds and then by Randal-Williams for manifolds of dimension at least $3$. In this note we prove that the stability slope of $1$ holds even with $\Z[1/2]$ coefficients for manifolds of dimension at least $3$. We also clarify some aspects of Segal's proof for topological manifolds.\end{abstract}

\maketitle

\section{Introduction} 

Let $C_k(M)$ denote the configuration space of $k$ unordered distinct points in a manifold $M$ (unless mentioned otherwise, when we say manifold we mean a paracompact Hausdorff topological manifold). That is $C_k(M)$ is the quotient  $(M^k-\Delta)/\Sigma_k$ with $\Delta = \{(m_1,\ldots,m_k)|m_i = m_j \text{ for some $i \neq j$}\}$ the fat diagonal and $\Sigma_k$ the symmetric group acting by permuting the components. If $M$ is a non-compact connected manifold, there are stabilization maps $t \colon C_k(M) \m C_{k+1}(M)$ whose definition we recall in Section \ref{secStab}. In \cite{Mc1}, McDuff proved the following theorem.

\begin{theorem}[McDuff]
Let $M$ be a non-compact connected smooth manifold which is the interior of a compact manifold with boundary. Then for $k$ sufficiently large compared to $i$, the stabilization map induces an isomorphism $t_* \colon H_i(C_k(M)) \m H_i(C_{k+1}(M))$.
\end{theorem}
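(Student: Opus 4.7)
I would prove the theorem by induction on the homological degree $i$, following the classical strategy of McDuff and Segal, whose whole point is to reduce the problem on $M$ to a problem involving $M\smallsetminus\{\text{pt}\}$, thereby decreasing the Euler characteristic of the complement and eventually terminating.

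\textbf{Stabilization and setup.} Because $M$ is the interior of a compact manifold $\overline M$ with boundary, I can choose a collar $\partial\overline M\times[0,1)\subset M$ and an embedded open ball $B$ lying deep inside this collar. The stabilization map $t:C_k(M)\to C_{k+1}(M)$ is defined, up to homotopy, by first applying a compactly supported ambient isotopy of $M$ that pushes the existing configuration out of a slightly larger ball $B'\supset B$ (available because $B'$ lies in a collar of the boundary, so it is isotopic into a small neighborhood of $\partial\overline M$), and then adjoining a fixed point of $B$. Connectedness of $M$ shows $t$ does not depend on the choice of $B$ up to homotopy.

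\textbf{Main argument.} I would introduce the ``marked'' configuration space $\tilde C_{k+1}(M)$ of $(k+1)$-configurations with one distinguished point, together with the projection $\pi:\tilde C_{k+1}(M)\to M$ remembering only the distinguished point. Its fiber over $x\in M$ is $C_k(M\smallsetminus\{x\})$, and the main technical input is McDuff's theorem that $\pi$ is a quasi-fibration. Comparing the associated Serre spectral sequence with the trivial fibration $C_k(M)\times M\to M$ via the map induced by $t$ (on a fiber, this map sends $C_k(M)$ to $C_k(M\smallsetminus\{x\})$ by pushing points out of a small ball around $x$), one obtains a long exact sequence of the form
\begin{equation*}
\cdots \longrightarrow H_i(C_k(M\smallsetminus\{x\}))\longrightarrow H_i(C_k(M))\longrightarrow H_i(C_{k+1}(M),\,C_k(M))\longrightarrow\cdots
\end{equation*}
(or a similar exact couple coming from the spectral sequence). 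The inductive hypothesis applies to the fiber $M\smallsetminus\{x\}$, which is again non-compact and (after small modification) the interior of a compact manifold with boundary; in degrees $<i$ the stabilization is an isomorphism there. Combined with the vanishing of $H_p(M;-)$ for $p$ larger than $\dim M$ in the spectral sequence, the five-lemma forces $t_\ast$ to be an isomorphism in degree $i$ once $k$ is large enough relative to the parameters coming from the induction.

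\textbf{Main obstacle.} The most delicate step is verifying that $\pi$ is genuinely a quasi-fibration in the topological category, without access to smooth flows or tubular neighborhoods; this is exactly the point that the present paper promises to clarify. The second difficulty is the bookkeeping: threading the numerical stability range through the spectral-sequence comparison so that the induction actually closes, and it is precisely the slope of this range that the paper aims to improve from $1/2$ to $1$ with $\mathbb{Z}[1/2]$-coefficients.
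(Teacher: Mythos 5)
You should first be aware that the paper does not prove this statement: it is quoted as background from McDuff's paper \cite{Mc1}. McDuff's actual argument runs through the scanning map to a space of sections of the fiberwise compactified tangent bundle (with the transfer supplying injectivity of $t_*$), and the quantitative versions — Segal's appendix and Section \ref{secopen} of this paper — instead filter $C_k(M)$ by the number of points lying in a subcomplex $X$ with $M \backslash X \cong \R^n$ and work in compactly supported cohomology (Proposition \ref{induction}). Your sketch follows neither route, and as written it has genuine gaps rather than merely deferred details.

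The central gap is this: the projection $\pi:\tilde C_{k+1}(M) \to M$ from the marked configuration space is in fact a fibration (Fadell--Neuwirth for ordered configurations, quotiented by $\Sigma_k$), so "is $\pi$ a quasi-fibration?" is not the hard point; the hard point is that the total space of $\pi$ is the $(k+1)$-fold cover of $C_{k+1}(M)$ obtained by marking a point, not $C_{k+1}(M)$ itself. Comparing the Serre spectral sequence of $\pi$ with that of the product $C_k(M)\times M \to M$ can at best relate $H_*(C_k(M)\times M)$ to $H_*(\tilde C_{k+1}(M))$; the long exact sequence you write down, whose third term is $H_i(C_{k+1}(M),C_k(M))$, does not come out of this comparison, and passing from the marked space back to the unordered space $C_{k+1}(M)$ is precisely where the difficulty lives (a transfer for the covering only retracts it off after inverting the primes dividing $(k+1)!$, which is not available integrally). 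In addition, the fiberwise comparison map $C_k(M)\to C_k(M\backslash\{x\})$ "pushing points out of a small ball around $x$" must be chosen continuously in $x$, which is not addressed; the five-lemma step is invoked without an exact sequence in which both stabilization maps actually appear; and the induction "on $i$ with fiber $M\backslash\{x\}$" never produces the quantitative conclusion "$k$ sufficiently large compared to $i$" — that bookkeeping, which you defer, is the content of the theorem. To make an honest proof you would essentially have to carry out either McDuff's scanning argument or a resolution/filtration argument of the kind the paper uses, rather than the spectral-sequence comparison as sketched.
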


The phrase ``$k$ sufficiently large'' was later quantified by Segal in Appendix $A$ of \cite{Se}.

\begin{theorem}[Segal]
Let $M$ be a non-compact connected manifold\footnote{Segal does not make clear the exact conditions on the manifold $M$. In this note we give the details necessary to make his proof work in the generality stated here.}. The stabilization map induces an isomorphism $t_* \colon H_i(C_k(M)) \m H_i(C_{k+1}(M))$ for $i \leq k/2$.
\end{theorem}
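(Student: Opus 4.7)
The plan is to resolve $C_k(M)$ by a semi-simplicial space of configurations decorated with arcs to infinity, and to run an inductive spectral sequence comparison with the stabilization map $t\colon C_k(M)\to C_{k+1}(M)$. Since $M$ is non-compact and connected, fix a proper embedding $\gamma\colon [0,\infty)\hookrightarrow M$ exiting every compact set; this both pins down $t$ (push a point in along $\gamma$) and provides a fixed asymptotic ``point at infinity'' to which auxiliary arcs may terminate.

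First I would introduce an augmented semi-simplicial space $A_\bullet\to C_k(M)$ whose $p$-simplices are tuples $(c,\alpha_0,\dots,\alpha_p)$ with $c\in C_k(M)$ and $\alpha_0,\dots,\alpha_p$ disjoint embedded arcs in $M$, each running from a distinct point of $c$ to $\gamma(N)$ for $N\gg 0$, meeting $c$ only at their starting endpoints. Face maps forget an arc; the augmentation forgets all arcs. Sliding the arc endpoints out along $\gamma$ identifies $A_p$ up to homotopy with $C_{k-p-1}(M)$, and under this identification each face map becomes a stabilization $t$.

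Second, I would show that the augmentation $\|A_\bullet\|\to C_k(M)$ is $\lfloor k/2\rfloor$-connected. The fibre over $c\in C_k(M)$ is the realization of a simplicial complex of arc systems: nonemptiness follows from path-connectedness of $M\setminus c$, while higher connectivity follows from a standard bad-simplex/link-type argument, using that any sphere of arc systems can, after a finite sequence of isotopies and small perturbations, be made pairwise disjoint and hence extended over a ball. The slope $1/2$ arises because disjointness is a pairwise constraint on arcs. The resulting spectral sequence has $E^1_{p,q}=H_q(C_{k-p-1}(M))$ for $p\geq 0$, with $E^1_{-1,q}=H_q(C_k(M))$, and $d^1$ an alternating sum of stabilization maps; it abuts to zero in total degree $\leq \lfloor k/2\rfloor$. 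Comparing with the analogous resolution of $C_{k+1}(M)$ and inducting on $i$ (with the path-connectedness of $C_k(M)$ for $k\geq 1$ as the base case) then yields that $t_*$ is an isomorphism on $H_i$ for $i\leq k/2$.

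The main obstacle will be the connectivity estimate in the topological category: making arcs generically disjoint classically uses transversality, and extending such isotopy arguments to arbitrary paracompact Hausdorff topological manifolds requires topological isotopy extension and care with collars near infinity. This is presumably the technical gap the footnote signals, and is likely where the bulk of the paper's clarification of Segal's proof will be concentrated.
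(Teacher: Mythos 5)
Your strategy --- resolving $C_k(M)$ by a semi-simplicial space of configurations decorated with disjoint arcs to infinity and comparing the resulting spectral sequences --- is genuinely different from the route this paper takes for Segal's theorem. Here one writes $M$ as $\R^n$ together with a closed subset $X$ of dimension $\leq n-1$ (Lemmas \ref{lem_exhaust} and \ref{lem_xexists}), inputs the explicit computation of $H_*(C(\R^n))$ over every field (Nakaoka/Cohen; this is what gives slope $1/2$ integrally, cf.\ the remark after Proposition \ref{propRn}), and transports it to $M$ by Poincar\'e duality and the long exact sequences in compactly supported cohomology for the filtration of $C_k(M)$ by the number of points in $X$, finishing with an exhaustion/colimit argument. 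Arc-resolution arguments of the kind you sketch do exist and can prove slope-$1/2$ stability, so the plan is not unreasonable, but as written it has genuine gaps rather than being a complete alternative proof.

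Concretely: (1) pairwise disjoint arcs cannot all terminate at the single point $\gamma(N)$; you must use distinct endpoints along the ray (or a boundary interval), and the resulting ordering changes the combinatorics of the complex and its links. (2) The connectivity estimate is exactly the hard content and you only assert it; worse, your ``make the arcs pairwise disjoint after small perturbations'' step is a general-position argument that is not available for arbitrary topological manifolds --- embedded arcs in a topological $3$-manifold can be wild, there is no ambient transversality, and taming/isotopy-extension for such arcs is delicate. The footnoted difficulty is resolved in this paper by quoting handle-decomposition and smoothing results (Moise, Quinn, Kirby--Siebenmann, Lashof) in Section \ref{secStab} precisely so that no general position for arcs is ever needed; your route would require a comparable but different body of technology, or a reduction to a smoothable open subset, which you do not supply. (3) The range bookkeeping is off: with an augmentation that is roughly $k/2$-connected, the standard comparison argument yields surjectivity of $t_*$ on $H_i$ for $i\leq k/2$ but an isomorphism only in a slightly smaller range, unless injectivity is furnished separately; the clean fix is McDuff's transfer argument (Theorem \ref{injective}), valid with any coefficients, which you never invoke. (4) The identification of the space of $p$-simplices with $C_{k-p-1}(M)$, with face maps becoming stabilization maps, needs a fibration/contractibility argument for arc systems with free endpoints, again nontrivial in the topological category.
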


The case of $\R^2$ was known prior to Segal by the work of Arnol'd in \cite{Ar}. As  $C_1(\R^n) \cong \R^{n}$ and $C_2(\R^n) \simeq \R P^{n-1}$, it is clear that Segal's stability slope of $1/2$ is optimal with $\Z$ coefficients. However, in \cite{RW}, Randal-Williams proved that it is not optimal with rational coefficients for manifolds of dimension at least $3$.

\begin{theorem}[Randal-Williams]
Let $M$ be a non-compact connected manifold of dimension at least $3$, that is the interior of a manifold with non-empty boundary. The stabilization map induces an isomorphism $t_* \colon H_i(C_k(M);\Q) \m H_i(C_{k+1}(M);\Q)$ for $i \leq k$.
\end{theorem}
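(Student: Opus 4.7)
The strategy is to sharpen Segal's resolution argument using rational coefficients and the assumption $\dim M \geq 3$. Segal's proof runs a spectral sequence associated to a semi-simplicial resolution $X_\bullet \to C_k(M)$ (whose $p$-simplices are configurations in $M$ together with auxiliary near-boundary data) whose $E^1$-page relates the relative homology $H_*(C_{k+1}(M), C_k(M))$ to lower-indexed configuration spaces; the slope $1/2$ comes from the fact that this resolution is only proven to be $(k/2)$-connected. The improvement to slope $1$ should come from splitting the spectral sequence by irreducible representations of the relevant symmetric group --- possible rationally because $\Q[\Sigma_{p+1}]$ is semisimple --- and from the extra connectivity that ordered configuration spaces on an $n$-manifold enjoy when $n \geq 3$.

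Concretely, I would form the augmented semi-simplicial space whose $p$-simplices consist of a configuration of $k - p$ unordered points in the interior of $M$ together with $p+1$ ordered disjoint embedded arcs from a fixed boundary basepoint to $p+1$ additional distinct points, using that $M$ is the interior of a manifold with nonempty boundary. The associated spectral sequence has $E^1_{p,q} \cong H_q(C_{k-p}(M); \mathcal{L}_p)$ for a local system $\mathcal{L}_p$ carrying a natural $\Sigma_{p+1}$-action by permuting the arcs. Rationally one can decompose $\mathcal{L}_p$ isotypically under $\Sigma_{p+1}$: the trivial isotype produces Segal's slope-$1/2$ bound, while the sign and other non-trivial isotypes see ordered-configuration classes whose Fadell--Neuwirth fibers are copies of $M$ minus finitely many points, hence $(n-2)$-connected for $n \geq 3$. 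This extra connectivity pushes the vanishing range out by the factor needed for slope $1$; inductively, together with the inductive hypothesis applied to $C_{k-p}(M)$, one obtains $E^\infty_{p,q} = 0$ for $p + q \leq k$, which gives the desired isomorphism.

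The main obstacle is proving the required high-connectivity of the arc resolution: one needs a linear slope estimate for the associated arc complex on a topological (not smooth) manifold. This is the technical heart of the argument, typically handled by Cohen--Macaulay or shellability arguments for the arc poset together with techniques for propagating such results from the smooth to the topological category; Segal's original treatment is somewhat terse here, and a careful reworking for topological manifolds is needed. Once this input is in place, the representation-theoretic splitting and the dimension-$\geq 3$ connectivity estimate feed into the spectral sequence as above to yield slope $1$ stability with rational coefficients.
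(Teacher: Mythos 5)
Your outline has a genuine gap exactly at the point where the slope improvement is supposed to come from. First, the connectivity claim is false as stated: for a general non-compact $M$ of dimension $n \geq 3$ the Fadell--Neuwirth fibres are homeomorphic to $M$ minus finitely many points, and these are not $(n-2)$-connected --- they have the homotopy type of a punctured $M$, which can have arbitrary fundamental group and homology (take $M$ to be an open handlebody, or $N \times \R$ for a closed manifold $N$). The $(n-2)$-connectedness you invoke holds only for $M \cong \R^n$, so the mechanism ``non-trivial $\Sigma_{p+1}$-isotypes see extra connectivity'' cannot be run directly on a general $M$. Second, even granting some connectivity input, the quantitative step --- that the isotypic decomposition of $\mathcal{L}_p$ converts this into $E^\infty_{p,q}=0$ for $p+q \leq k$, i.e.\ into slope $1$ --- is asserted rather than argued; nothing in the outline explains why the trivial isotype does not again cap the range at slope $1/2$. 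Note also that in the semi-simplicial arc resolution the arcs are ordered, so the space of $p$-simplices is homotopy equivalent to a configuration space with $p+1$ fewer points and \emph{constant} coefficients; the $\Sigma_{p+1}$-isotypic splitting of an $E^1$-page of the form $H_q(C_{k-p}(M);\mathcal{L}_p)$ that your argument hinges on is not visible there without substantial further work. The deferred connectivity of the arc resolution itself is known, but since the improvement mechanism is the unproven part, the proposal does not yet constitute a proof.

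For comparison, the paper obtains this statement (in the stronger form with $\Z[1/2]$, hence in particular $\Q$, coefficients, and without assuming $M$ is the interior of a manifold with boundary) by a different, more computational route: Cohen's explicit calculation of $H_*(C(\R^n))$ shows that every class in the range $i \leq k$ is divisible by the point class $e$, giving slope-$1$ stability for $\R^n$ directly; this is then propagated to general $M$ by Poincar\'e duality in compactly supported cohomology, filtering $C_k(M)$ by the number of points lying in a spine $X$ of dimension $\leq n-1$ whose complement is homeomorphic to $\R^n$, and running the long exact sequences of Proposition \ref{exact} together with the five lemma, with an exhaustion argument handling arbitrary non-compact $M$. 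No semi-simplicial resolution and no representation theory of symmetric groups enter; the only special input in dimension $\geq 3$ (and away from the prime $2$) is the instability of the classes $Q^s e$, $\beta Q^s e$ and $\lambda(e,e)$.
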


This result partially improved on the work of Church in \cite{Ch} who obtained a range of $i \leq k-1$ for all orientable manifolds with finitely-generated rational cohomology. Church's and Randal-Williams' theorems can be rephrased as saying that if you invert all primes the stability slope increases from $1/2$ to $1$. We ask the question: If you only invert some primes, how much if at all does the stability slope increase? We prove that a stability slope of $1$ holds after only inverting the prime $2$.

\begin{theorem}
\label{main}
Let $M$ be a non-compact connected manifold of dimension at least $3$. The stabilization map induces an isomorphism $t_* \colon H_i(C_k(M);\Z[1/2]) \m H_i(C_{k+1}(M);\Z[1/2])$ for $i \leq k$. 
\end{theorem}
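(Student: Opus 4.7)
The plan is to combine Segal's geometric resolution of $C_k(M)$ by arcs with a sharper analysis of its connectivity with $\Z[1/2]$ coefficients, in the spirit of Randal-Williams' rational argument. I will fix an end of $M$ along with a sequence of disjoint embedded rays going to infinity, and introduce an augmented semi-simplicial space $X_\bullet(k) \to C_k(M)$ whose space of $p$-simplices parametrizes a configuration $\underline{x} \in C_k(M)$ together with a choice of $(p+1)$ disjoint embedded arcs connecting $(p+1)$ distinguished points of $\underline{x}$ to infinity along the prescribed rays. The augmentation forgets the arcs, while forgetting arcs together with their endpoints produces a map to $C_{k-p-1}(M)$. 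Segal's stabilization map $t$ arises naturally from this resolution.

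The main technical step is to show that the geometric realization $\|X_\bullet(k)\| \to C_k(M)$ is an isomorphism on $\Z[1/2]$-homology in a range increasing linearly in $k$. By a bad-simplex / poset-connectivity argument of Hatcher--Vogtmann--Wahl type applied fiberwise over $C_k(M)$, the fiber is identified with the realization of a simplicial complex of disjoint arcs, and I would establish a connectivity bound of the form $(k-1)$ with $\Z[1/2]$ coefficients. The hypothesis $\dim M \geq 3$ is used here to ensure that disjoint arcs can be generically isotoped, just as in Randal-Williams' proof.

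Granting the resolution, one obtains a first-quadrant spectral sequence
\[
E^1_{p,q} = H_q(C_{k-p-1}(M); \Z[1/2]) \otimes \mr{sgn}_{p+1} \Longrightarrow 0 \quad \text{for } p+q \leq k,
\]
where $\mr{sgn}_{p+1}$ is the sign representation of $\Sigma_{p+1}$ (encoding the fact that the arcs are unordered) and the $d^1$-differential is an alternating sum of stabilization maps $t_*$. The appearance of the sign representation is precisely where the prime $2$ enters: over $\Z[1/2]$ one can split off the sign-isotypic part of induced representations cleanly, whereas over $\Z$ this splitting fails at $p = 1$ (reflecting the $\Z/2$ in $H_{n-1}(\R P^{n-1})$). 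An induction on $k$, combined with an inspection of this spectral sequence in total degree $\leq k$, then shows that the $d^1$-complex is exact in this range, forcing $t_* : H_i(C_k(M); \Z[1/2]) \to H_i(C_{k+1}(M); \Z[1/2])$ to be an isomorphism for $i \leq k$.

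The main obstacle is the connectivity estimate for the arc resolution at the prime $2$: one needs a genuinely $\Z[1/2]$-integral bound, not merely a rational one, and the standard shellability arguments must be adapted to avoid losing a factor of two. A secondary technical issue, inherited from Segal's Appendix A, is that all the above must be carried out in the topological category for manifolds that are not necessarily the interior of a manifold with boundary; this requires being careful about what ``end'' and ``arc to infinity'' mean, and is the part of Segal's original proof that I would have to make precise before the spectral sequence can even be set up.
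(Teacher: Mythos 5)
There is a genuine gap, and it is located exactly at the step you flag as the ``main technical step'': the claimed $(k-1)$-connectivity of the arc resolution ``with $\Z[1/2]$ coefficients,'' together with the idea that inverting $2$ enters through splitting off sign-isotypic pieces of induced representations. Neither of these is where the prime $2$ actually matters. The bad-simplex/poset connectivity arguments for arc-type complexes are coefficient-independent, and the resolution is already highly connected integrally; the problem is that with $E^1_{p,q}=H_q(C_{k-p-1}(M))$ and $d^1$ an alternating sum of stabilization maps, the standard inductive argument closes only at slope $1/2$: to control total degree $\leq k+1$ in the resolution of $C_{k+1}(M)$ you need information about $H_q(C_{k-p}(M))$ for $q$ up to $k+1-p$, one degree beyond what the inductive hypothesis at slope $1$ provides, and no improvement of the connectivity of the resolution fixes this. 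The true obstruction to slope $1$ is local and low-dimensional: $H_1(C_2(\R^n);\F_2)\cong\Z/2$ (the class $Q^1e$, i.e.\ $H_1(\R P^{n-1})$, not its top homology) already violates surjectivity at $k=1$, and in general the mod $2$ Dyer--Lashof operations $Q^s$ raise degree by only $s$ while doubling the number of points. What makes $\Z[1/2]$ work is Cohen's computation of $H_*(C(\R^n);\F_p)$ for odd $p$: there $Q^s$ and $\beta Q^s$ raise degree by at least $2(p-1)-1\geq p$ while multiplying the number of points by $p$, and $\lambda(e,e)$ has degree $n-1\geq 2$ and weight $2$, so every generator other than $e$ is unstable. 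This is also where $\dim M\geq 3$ enters (via the degree of the Browder class), not through general position for arcs, which is equally available in the cases where slope $1$ fails. Your sketch has no step at which this local input could enter, so the spectral sequence cannot ``force'' isomorphisms for $i\leq k$.

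For comparison, the paper's proof is organized differently: it first proves the $\R^n$ case at each odd prime directly from Cohen's description of $H_*(C(\R^n);\F_p)$ as a free graded-commutative algebra, then propagates to a general non-compact $M$ not by a semi-simplicial resolution but by choosing (via topological handle decompositions, using Kirby--Siebenmann, Quinn, etc.) a closed subspace $X\subset M$ of dimension $\leq n-1$ with $M\setminus X\cong\R^n$, filtering $C_k(M)$ by the number of points in $X$, and inducting with the long exact sequence in compactly supported cohomology together with Poincar\'e duality; finally it passes from $\F_p$ for all odd $p$ to $\Z[1/2]$ using finite generation of the homology. A Randal-Williams-style resolution argument can be made to work (Cantero and Palmer carry this out), but there too Cohen's calculation is the indispensable extra ingredient; your concerns about ends and arcs in the topological category are legitimate but secondary, and are handled in the paper by handle decompositions rather than arcs to infinity.
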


By considering $C_1(\R^n) $ and $C_2(\R^n)$, we see that the above theorem is optimal in the following sense: a stability slope of $1$ does not hold in dimension $2$ with $\Z[1/2]$ coefficients nor in any dimension greater than $1$ with coefficients in a ring where $2$ is non-zero and not invertible. Though we do not consider affine linear ranges in this paper, our techniques apply to establish improved ranges for configuration spaces of surfaces but with an offset as in \cite{Ch}. In work in progress, Zachary Himes has used these ideas to establish a slope $(p-1)/p$ range for configuration spaces of surfaces with $\mathbb F_p$-coefficients


Our proof is a streamlining of Segal's proof in \cite{Se}. We avoid the need to consider symmetric products by using Cohen's results in \cite{CLM} instead of Nakaoka's results in \cite{N2}. The use of Cohen's calculations is in fact essential as Segal obtains the best result one could hope for given Nakaoka's work as input. Interestingly, the calculations of Cohen were available to Segal at the time and were even cited by him in \cite{Se}. One can also generalize Randal-Williams' proof to obtain our main theorem if one uses Cohen's calculations, see \cite{palmercantero} (in fact, Cantero and Palmer independently proved our result at the same time). It seems harder to use Church's argument in \cite{Ch} to get any torsion or integral information. He proved representation stability for the rational cohomology groups of ordered configuration spaces. In positive characteristic, there is no simple relationship between the cohomology of ordered and unordered configuration spaces.

We became interested in proving Theorem \ref{main} because this result is relevant for \cite{kupersmillerencell}. There the stability range for configuration spaces gives an upper bound for the range of a local-to-global principle for homological stability.

\subsection{Organization}

The organization of the paper is as follows. In Section \ref{secStab} we recall the definition of the stabilization map. We also recall existence results for handle decompositions for topological manifolds and some corollaries. In Section \ref{Rn} we use Cohen's calculations in \cite{CLM} to prove Theorem \ref{main} in the case $M=\R^n$ and in Section \ref{secopen} we leverage this result to prove Theorem \ref{main} in general.

\subsection{Acknowledgments}

We would like to thank Martin Bendersky, S{\o}ren Galatius, Zachary Himes, Martin Palmer, Oscar Randal-Williams and TriThang Tran. Additionally, we thank the anonymous referee for many helpful suggestions and corrections.

\section{Handle decompositions of non-compact manifolds and stabilization maps}\label{secStab} In this section we discuss some technical aspects of the theory of paracompact Hausdorff topological manifolds with the goal of defining stabilization maps and finding nice handle decompositions compatible with these stabilization maps.

In the smooth setting there is an intimate relationship between Morse theory and handle decompositions. In particular one can go from a Morse function to a handle decompositions using the flow along the gradient vector field of the Morse function. For topological manifolds, one can make sense of topological Morse theory (see Section III.3 of \cite{kirbysiebenmann}) and handle decompositions (see Section III.2 of \cite{kirbysiebenmann}), and these are related as expected using TOP gradient-like fields. In particular we quote page 113 of \cite{kirbysiebenmann}: ``Using TOP gradient-like fields, one can carry through for the topological case the elementary discussion of Morse functions as they relate to cobordisms, surgeries, handles etc.''

\begin{lemma}\label{lem_exhaust}Every non-compact connected manifold has an exhaustion by compact manifolds admitting a finite handle decomposition with a single 0-handle.\end{lemma}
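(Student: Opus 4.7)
My plan is to use topological Morse theory, as licensed by the Kirby--Siebenmann quotation immediately above. The starting point would be the existence of a proper topological Morse function $f \colon M \to [0,\infty)$ with minimum value $0$; such functions can be produced on any paracompact topological manifold using the TOP Morse theory of Section III.3 of \cite{kirbysiebenmann} combined with a partition-of-unity argument against a proper exhaustion function to ensure properness. I would then choose a sequence of regular values $t_1 < t_2 < \cdots$ with $t_i \to \infty$ and set $K_i := f^{-1}([0,t_i])$, obtaining an exhaustion $M = \bigcup_i K_i$ by compact codimension-zero submanifolds with boundary.

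Next I would extract the handle decomposition. By the Kirby--Siebenmann translation of TOP Morse functions into handle decompositions via TOP gradient-like fields, the critical points of $f$ lying in $K_i$ give the handles of a handle decomposition of $K_i$; since $f$ is proper and $K_i$ is compact, only finitely many critical values lie in $[0,t_i]$, so this handle decomposition is automatically finite.

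To finish, I would arrange each $K_i$ to be connected and then reduce to a single $0$-handle. Connectedness can be forced by enlarging each $K_i$ to contain a regular neighborhood of finitely many arcs in $M$ joining its (finitely many) components; such arcs exist because $M$ is connected, and the enlargement amounts to attaching further $1$-handles. Once $K_i$ is connected, the standard TOP handle cancellation lemma allows us to pair each $0$-handle beyond the first with a suitably chosen $1$-handle (coming from connectedness of the $0$-skeleton after handle slides) and cancel it, leaving a finite handle decomposition with exactly one $0$-handle.

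The main technical obstacle is remaining inside the TOP category throughout: TOP Morse functions, gradient-like fields, and handle cancellation are developed in Kirby--Siebenmann and behave well in dimensions $\neq 4$. For dimension $4$ one must instead invoke Quinn's theorem that every connected non-compact topological $4$-manifold admits a smooth structure, reducing the construction to the smooth case where the corresponding Morse-theoretic and handle-cancellation inputs are classical.
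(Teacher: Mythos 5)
Your proposal follows essentially the same route as the paper: a proper (smooth or topological) Morse function, compact sublevel sets as the exhaustion, and cancellation of the extra index-$0$ data (you do it at the handle level, the paper at the level of the Morse function) to reach a single $0$-handle, with Quinn's smoothing theorem covering dimension $4$. The only caveat is attribution: Kirby--Siebenmann's TOP Morse/handle theory is established for dimension $\geq 6$, so dimension $5$ requires Quinn's later work on ends and dimensions $\leq 3$ are handled by Moise's smoothing, rather than all dimensions $\neq 4$ following from Kirby--Siebenmann alone.
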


\begin{proof}This follows if the manifold is smoothable or we are in a dimension where topological Morse theory works. In particular, one can take a proper smooth or topological Morse function $f \colon M \to \R$ with a global minimum and inductively cancel all the additional critical points of index $0$ that appear. This only involves finitely many modifications of $f$ on $f^{-1}((\infty,n])$, so the result is a Morse function with a single minimum. The relationship between Morse functions and handle decompositions then gives the desired result.

In dimension $\leq 3$, topological manifolds are smoothable by Moise \cite{moise}. In dimension $4$, all non-compact topological manifolds are smoothable by Theorem 1.1 of \cite{quinnsmooth}. In Section III.3 of \cite{kirbysiebenmann}, we learn that topological Morse theory works in dimension $\geq 6$. By the work of Quinn in \cite{quinnendsiii}, these results can be extended to dimension $5$.\end{proof}

In the paper, the term finite complex will be used to describe spaces obtained by consecutively gluing finitely many cells $D^n$ along their boundaries, not necessarily in order of increasing dimension. The dimension of a finite complex is defined to be the highest occurring dimension of a cell.

\begin{lemma}\label{lem_xexists}Let $M$ be the interior of an $n$-dimensional  manifold $\bar{M}$ admitting a finite handle decomposition with a single 0-handle. Then there is closed subspace $X$ of $M$ homeomorphic to an open subset of a finite complex of dimension $\leq n-1$, such that $M \backslash X$ is homeomorphic to $\R^n$.\end{lemma}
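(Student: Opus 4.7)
The plan is to let $X$ be the intersection with $M$ of the union of the closed cocores of the positive-index critical points of a TOP Morse function realizing the handle decomposition, and to identify $M\setminus X$ with the basin of the unique minimum under a TOP gradient-like flow.

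Using the TOP Morse theory technology referenced in Lemma \ref{lem_exhaust} (\cite{kirbysiebenmann} for $\dim\geq 6$, Quinn for $\dim=5$, and smoothability for $\dim\leq 4$), I would pick a TOP Morse function $f\colon \bar M \to [0,1]$ realizing the given handle decomposition, with $f(p_0)=0$ at the unique index-$0$ critical point $p_0$ and $f^{-1}(1)=\partial \bar M$, together with a TOP gradient-like vector field $V$. Writing $k_i$ for the index of $p_i$, the unstable manifold $W^u_V(p_i)\subseteq \bar M$ is an open disk of dimension $n-k_i$, and for $i\geq 1$ its closure $\overline{W^u_V(p_i)}$ is a closed disk of dimension $\leq n-1$. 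These assemble into a finite complex $\tilde K := \bigcup_{i\geq 1}\overline{W^u_V(p_i)}\subset \bar M$ of dimension $\leq n-1$, the boundary of each cell being attached via the gradient-like flow into higher-index cocores. Setting $X := \tilde K\cap M$, this is closed in $M$ (since $\tilde K$ is closed in $\bar M$) and open in $\tilde K$ (since $M$ is open in $\bar M$), giving the structural form of $X$ claimed.

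For the complement: every $x\in M$ has a backward $V$-trajectory contained in the compact set $f^{-1}([0,f(x)])$ and hence converging to some critical point, partitioning $M$ into the basins $W^u_V(p_i)\cap M$. Because the cocores are disjoint open cells of the dual CW structure, $\tilde K \cap W^u_V(p_0)=\emptyset$, so $M\setminus X = W^u_V(p_0)\cap M$. To see this is homeomorphic to $\R^n$, fix a small TOP-disk neighborhood $D_\varepsilon\cong D^n$ of $p_0$ on which $V$ points radially outward; the forward $V$-flow map $(v,t)\mapsto \phi^V_t(v)$ gives a homeomorphism from $\partial D_\varepsilon\times[0,\infty)$ onto $(W^u_V(p_0)\cap M)\setminus\mathrm{int}(D_\varepsilon)$, which together with $D_\varepsilon$ exhibits $W^u_V(p_0)\cap M$ as $D^n\cup_{\partial D^n}(\partial D^n\times[0,\infty))\cong \R^n$.

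The main obstacle will be making this last flow-out parametrization rigorous in the TOP category, in particular verifying that the $V$-flow map is a homeomorphism onto the correct image, even along directions whose trajectories limit onto another critical point in infinite (forward) time rather than reaching $\partial \bar M$. This is controlled by the TOP gradient-like vector field machinery developed in \cite{kirbysiebenmann}, together with the low-dimensional caveats noted above.
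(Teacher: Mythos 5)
The decisive gap is that your argument runs on global flow dynamics that the TOP technology you invoke does not supply. Everything in your construction --- ``the backward $V$-trajectory'' of a point, its convergence to a critical point, the partition of $M$ into basins, the unstable manifolds $W^u_V(p_i)$ being open disks, and the flow-out homeomorphism for the basin of the minimum --- presupposes that the gradient-like field $V$ integrates to a flow with unique integral curves. A merely continuous vector field need not have unique trajectories at all, and the Kirby--Siebenmann gradient-like fields are set up to carry out handle constructions chart by chart, not to furnish a global stable/unstable manifold theory; you name this as ``the main obstacle'' but do not resolve it, and it is precisely the point at issue. Moreover, even granting a smooth structure, your key structural claim is false as stated: the closure $\overline{W^u_V(p_i)}$ of the ascending manifold of a positive-index critical point need not be a closed disk (for the height function on a vertically embedded torus, the closure of the ascending manifold of the lower saddle is a circle through the maximum), and turning $\bigcup_{i\geq 1}\overline{W^u_V(p_i)}$ into a finite complex of dimension $\leq n-1$ requires a Morse--Smale transversality condition plus nontrivial results on CW structures carried by unions of unstable manifolds --- none of which you arrange. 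A smaller, fixable issue: the parametrization by $\partial D_\varepsilon\times[0,\infty)$ ignores trajectories that reach $\partial\bar M$ in finite time, so the domain should really be the region under the (only lower semicontinuous) exit-time function.

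By contrast, the paper's proof needs no dynamics at all: it is an elementary induction on the number of handles. Writing $\bar M=\bar M'\cup(\text{$d$-handle})$ with $d\geq 1$, one splits $\bar M$ along the cocore $C$ of that handle --- which is an honest $(n-d)$-disk by the very definition of a handle --- obtaining a manifold homeomorphic to $\bar M'$, applies the inductive hypothesis there, and sets $\bar X$ to be the union of $C$ with the (reglued) complex from the previous stage; the identification $\mathrm{int}(\bar M\setminus\bar X)\cong\R^n$ is then inherited directly from the inductive step. If you replace your unstable manifolds by the actual cocores of the given handle decomposition and argue handle by handle in this way, the TOP-flow and closure problems disappear entirely.
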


\begin{proof}We prove by induction over the number $m$ of handles of $\bar{M}$ that there exists a $\bar{X}$ in $\bar{M}$ that is a finite complex of dimension $\leq n-1$ and such that $\mr{int}(\bar{M} \backslash \bar{X}) \cong \R^n$. Then $X = \bar{X} \cap \mr{int}(M)$. If $m=1$, $\bar{M} = D^n$ and $\bar{X} = \emptyset$, so we are done. Suppose the lemma is true for $m$ handles and let $\bar{M}$ have $m+1$ handles. Then we can write $\bar{M} = \bar{M}' \cup \text{$d$-handle}$ with $d \geq 1$. Splitting $\bar{M}$ at the cocore $C$ of the $d$-handle, we get a manifold $\bar{N}$ which is homeomorphic to $\bar{M}'$. Using the inductive hypothesis, find a $\bar{X}'$ for $\bar{N}$. Denote the image of $\bar{X}'$ after gluing the two copies of $C$ in $\bar{N}$ together by $\bar{Y}$. Now take $\bar{X} = \bar{Y} \cup C$, which is obtained by glueing the cells in $\bar{X}'$ to $C$. We have that $\mr{int}(\bar{M} \backslash \bar{X}) \cong \mr{int}(\bar{N} \backslash \bar{Y}') \cong \mr{int}(\bar{M}' \backslash \bar{X}') \cong \R^n$.\end{proof}

We note that Segal in \cite{Se} claims the results in the above two lemmas without proof. We presume he implicitly restricts attention to smooth manifolds, but it is possible he had in mind a different proof. The proofs given here depend on deep results in topological manifold theory, some of which were not available at the time. We will also use these results to construct the stabilization map, but not after a further technical lemma.

\begin{lemma}\label{lem_uexists}Let $M$ be a non-compact connected manifold with a proper map $f \colon M \m [0,\infty)$. Then $M$ contains a contractible open submanifold $U$ such that the image of the map $f|_U \colon U \m [0,\infty)$ contains $[N,\infty)$ for some $N \geq 0$.\end{lemma}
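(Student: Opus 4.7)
The plan is to build $U$ as an increasing union $U = \bigcup_{i \geq 0} U_i$ of open subsets of $M$, each homeomorphic to $\R^n$ and containing a chosen point $x_i \in M$ with $f(x_i) \geq i$. Such a $U$ is weakly contractible: any map from a sphere factors through some $U_i$ by compactness, and is null-homotopic in the contractible $U_{i+1}$. Since open subsets of topological manifolds are ANRs and hence have the homotopy type of CW complexes, $U$ is in fact contractible. Moreover $U$ is path-connected, so the intermediate value theorem applied to any path from $x_0$ to $x_i$ inside $U$ gives $[f(x_0), f(x_i)] \subseteq f(U)$, and letting $i \to \infty$ yields $[N, \infty) \subseteq f|_U(U)$ with $N := f(x_0)$.

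The points $x_i$ exist because $f$ is unbounded: by properness and non-compactness, no compact $f^{-1}([0,R])$ can equal $M$. For the inductive construction of the $U_i$, take $U_0$ to be a coordinate chart around $x_0$. Given $U_{i-1} \cong \R^n$, use path-connectedness of $M$ to join $x_i$ to a point of $U_{i-1}$ by a path, and thicken this path to a locally flat open tube $T \cong \R^n$ meeting $U_{i-1}$ only in a collar of a standard open $(n-1)$-disk on a ``boundary sphere'' of $U_{i-1}$. The union $U_i := U_{i-1} \cup T$ is then again homeomorphic to $\R^n$, as attaching an open finger to an open $n$-ball along a disk on its boundary is an elementary move preserving the homeomorphism type.

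The main technical obstacle is producing the locally flat tube $T$ in the topological category and verifying that the described ``finger attachment'' preserves the homeomorphism type. This relies on exactly the same deep results on topological manifolds invoked in Lemmas \ref{lem_exhaust} and \ref{lem_xexists}: smoothability in dimensions $\leq 3$ (Moise) and of non-compact topological $4$-manifolds (Quinn), combined with topological handle theory and local flatness results in dimensions $\geq 5$ due to Kirby--Siebenmann, extended to dimension $5$ by Quinn. The case $n = 1$ is immediate, since a connected non-compact $1$-manifold is homeomorphic to $\R$ and one may take $U = M$.
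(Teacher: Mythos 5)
Your overall skeleton (nested opens $U_i\cong\R^n$ containing points $x_i$ with $f(x_i)\geq i$, weak contractibility via compactness of spheres, upgrading to contractibility via the CW/ANR homotopy type, and the intermediate value theorem on paths to get $[N,\infty)\subseteq f(U)$) matches the paper's proof, and those parts are fine. The genuine gap is in the inductive step producing $U_i$ from $U_{i-1}$. Your inductive hypothesis is only that $U_{i-1}$ is an open subset of $M$ homeomorphic to $\R^n$, and such a subset has no ``boundary sphere'': its frontier in $M$ need not be a sphere, need not be locally flat, and can be extremely wild (think of the frontier of the interior of a horned ball, or worse). So the operation ``attach a locally flat open tube $T$ meeting $U_{i-1}$ only in a collar of a standard open $(n-1)$-disk on a boundary sphere of $U_{i-1}$'' is not defined, and the claim that $U_{i-1}\cup T\cong\R^n$ is an ``elementary move'' is exactly where the content lies: unions of two open sets each homeomorphic to $\R^n$ along a connected overlap can fail to be $\R^n$ (this is the Whitehead-manifold phenomenon), so some control of how $U_{i-1}$ sits in $M$ is indispensable. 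The blanket appeal to Moise, Quinn and Kirby--Siebenmann does not fill this in, because those results tame the ambient manifold, not the frontier of the previously constructed open set; to make your induction work you would have to carry a stronger hypothesis (e.g.\ that $U_{i-1}$ is the interior of a locally flat compact $n$-ball, or an increasing union of such), prove that arcs can be tamed and thickened so as to meet that ball in a flat face, and only then invoke a boundary-connected-sum argument. None of this is routine in the topological category, and it is precisely what your write-up asserts rather than proves.

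For contrast, the paper sidesteps this wildness issue entirely: it takes the compatible exhaustion by compact pieces with a finite handle decomposition and a single $0$-handle (Lemma \ref{lem_exhaust}) and, for each piece, the complex $X_i$ of dimension $\leq n-1$ from Lemma \ref{lem_xexists}, and sets $U_i = M_i\backslash X_i \cong \R^n$ with $U_{i+1}\cap M_i = U_i$. Because the hard topological input is packaged once and for all into handle decompositions (where the attaching regions are standard by construction), no taming of frontiers is ever needed, and since the $M_i$ exhaust $M$ while each $X_i$ is of dimension $\leq n-1$, the union $U$ automatically contains points with arbitrarily large $f$-value; connectedness of $U$ then gives $[N,\infty)\subseteq f(U)$ just as in your argument. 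If you want to salvage your finger construction, the cleanest fix is to make it ride on those same lemmas (or to strengthen your induction to locally flat balls and prove the taming statements you need); as written, the key step is unjustified.
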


\begin{proof}Let $U$ be the union of $U_i = M_i \backslash X_i$ obtained by applying the construction of Lemma \ref{lem_xexists} to an exhaustion $\bar{M_i}$ as in Lemma \ref{lem_exhaust} and note that we can pick the $U_i$ compatibly, in the sense that $U_{i+1} \cap M_i = U_i$. Each $U_i$ is homeomorphic to $\R^n$ and hence contractible. As every based map $S^n \m U$ factors over some $U_i$, we have that $\pi_j(U) = 0$ for all $j \geq 0$ and all base points. By \cite{milnorcw}, every manifold has the homotopy type of a CW-complex and hence $U$ is in fact contractible. From the construction it is clear that $U$ has the desired properties.
\end{proof}

We next recall the definition of the stabilization map $t \colon C_k(M) \m C_{k+1}(M)$ for $M$ non-compact and connected.

Configuration spaces have the following functoriality with respect to embeddings. Let $M_i$ be manifolds and $k_i$ be numbers with $\sum k_i=k$. An embedding $e \colon \bigsqcup M_i \m M$  induces a map of configuration spaces $e' \colon \prod C_{k_i}(M_i) \m C_{k}(M)$ defined by applying the embedding $e$ to the location of each point. 

\begin{lemma}\label{lem_stabemb}Let $M$ be non-compact connected manifold, then we can find an embedding $e \colon M \sqcup \R^n \m M$ such that $e|_M$ is isotopic to $\mr{id}_M$. 

Furthermore, this can be chosen so that $M$ has an exhaustion by the interiors of compact manifolds $\bar{M_i}$ admitting a finite handle decomposition with a single 0-handle and $e$ restricts to an embedding $e_i \colon M_i \sqcup \R^n \m M_i$ such that $e_i|_{M_i}$ isotopic to $\mr{id}_{M_i}$.\end{lemma}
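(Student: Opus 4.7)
My plan is to use the earlier lemmas to produce an open submanifold $U \cong \R^n$ of $M$ that reaches infinity in $M$, and then use this ``end chart'' to construct an ambient isotopy of $M$ whose time-$1$ map has image missing an open ball $B$, into which the new $\R^n$ factor will be placed.

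For the setup, I apply Lemma~\ref{lem_exhaust} to obtain an exhaustion $\bar M_1 \subset \bar M_2 \subset \cdots$ of $M$ by compact manifolds with finite handle decompositions having a single $0$-handle, arranged so that $\bar M_i \subset \mr{int}(\bar M_{i+1})$; this exhaustion will serve the ``furthermore'' clause. Applying Lemma~\ref{lem_xexists} to each $\bar M_i$ compatibly, as in the proof of Lemma~\ref{lem_uexists}, I obtain open submanifolds $U_i := M_i \setminus X_i \cong \R^n$ satisfying $U_{i+1} \cap M_i = U_i$, and I set $U := \bigcup_i U_i$. Since $U$ is a monotone union of copies of $\R^n$, a theorem of M.~Brown gives $U \cong \R^n$; by Lemma~\ref{lem_uexists}, for any proper $f \colon M \to [0,\infty)$ the image $f(U)$ contains $[N,\infty)$, so $U$ reaches infinity in $M$. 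I fix a homeomorphism $\phi \colon \R^n \xrightarrow{\cong} U$ with $\phi(0) \in U_1$, arranged so that the coordinate ray $t \mapsto \phi(t, 0, \ldots, 0)$ is a proper embedding of $[0, \infty)$ into $M$. This is possible because $U$ reaches infinity: one first picks any proper path in $U$ going to infinity in $M$, then precomposes $\phi$ with a self-homeomorphism of $\R^n$ identifying that path with a coordinate ray.

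Using this end chart, I construct an ambient isotopy $\alpha \colon M \times [0,1] \to M$ through embeddings, with $\alpha_0 = \mr{id}_M$, such that $\alpha_1(M)$ is disjoint from an open ball $B \subset U_1$ centered at $\phi(0)$ and $\alpha_s(M_i) \subset M_i$ for every $i$. Then setting $e|_M := \alpha_1$ and $e|_{\R^n}$ to be any homeomorphism $\R^n \xrightarrow{\cong} B$ yields the required embedding, and the ``furthermore'' clause follows from $\alpha_s(M_i) \subset M_i$ combined with $B \subset M_1 \subset M_i$. The main obstacle I expect is the construction of $\alpha_s$ itself. The difficulty is a tension: a ``shift along the coordinate ray'' built from a compactly supported vector field on $U$ gives at each time a self-homeomorphism of $M$ — since by invariance of domain any embedding $\R^n \to \R^n$ equal to the identity outside a compact set is surjective — and so misses no ball; while a non-surjective self-embedding of $\R^n$ such as $y \mapsto 5 e_1 + y/(4(1 + |y|))$ in general fails to extend by the identity outside $U$ to a continuous self-map of $M$, because it does not send the end of $\R^n$ to the topological boundary $\partial U \subset M$ correctly. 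The resolution exploits both the flexibility in choosing $\phi$ and the nested structure $U = \bigcup U_i$: one works inside the smallest stratum $U_1 \cong \R^n$ and matches $\alpha_s$ carefully to the identity near $\partial U_1$ while still freeing up $B \subset U_1$, arranging simultaneously that $\alpha_s(U_i) \subset U_i$ for all $i \geq 1$ so the resulting embedding respects the whole exhaustion.
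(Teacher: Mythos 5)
Your setup (Brown's monotone--union theorem giving $U \cong \R^n$, and a proper-in-$M$ ray inside $U$) is reasonable, but the heart of the lemma --- the actual construction of the embedding $e$ and its isotopy to the identity --- is never carried out, and the sketch you give of the ``resolution'' does not work as stated. You correctly identify the obstruction: a self-embedding of $M$ (or of any connected open subset) that is the identity outside a compact set is surjective, so the support of $\alpha_1$ must be a non-compact set running out to infinity in $M$. But your proposed fix, ``work inside the smallest stratum $U_1$ and match $\alpha_s$ to the identity near $\partial U_1$ while freeing up $B \subset U_1$,'' contradicts this: $U_1 = M_1 \backslash X_1$ has compact closure in $M$, so an embedding that is the identity near $\partial U_1$ and supported in $U_1$ is surjective by your own invariance-of-domain argument and frees no ball. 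Conversely, the natural non-surjective move --- pushing material along the proper ray toward infinity inside a tube around it --- is incompatible with the invariance $\alpha_s(M_i)\subset M_i$ for a \emph{pre-chosen} exhaustion, since the ray must exit every $M_i$ and the displaced material crosses the frontier of $M_i$. This is exactly why the paper does not try to keep an a priori exhaustion invariant: it first builds a tubular neighborhood $\nu \cong N_1([0,\infty))$ of a proper ray, defines $e$ by an explicit push supported in $\nu$ that is the identity near the radial boundary (properness of the ray guarantees the frontier of $\nu$ in $M$ consists only of that radial boundary, so extension by the identity is continuous, and the isotopy to the identity is supported in $\nu$ as well), and then constructs the exhaustion \emph{afterwards}, attaching handles to $\bar{\nu}$ so that each $\bar{M_i}$ contains the entire (compactified) tube; with that choice the push automatically maps each $M_i$ into itself. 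Your proposal is missing both the explicit local model and this reordering, and without them the ``furthermore'' clause is not established.

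A second, smaller gap: you straighten ``any proper path in $U$ going to infinity in $M$'' to a coordinate ray by a self-homeomorphism of $\R^n$. In the topological category a proper arc in $\R^n$ need not be tame (Fox--Artin wild arcs, and knotted smooth rays already in $\R^3$), so no such straightening homeomorphism need exist for an arbitrary choice; you would have to first replace the path by a tame (e.g.\ smooth in the standard structure on $U\cong\R^n$) proper embedding, or argue as the paper does, which never straightens the ray globally but only uses a tubular neighborhood of a smooth proper embedding $\gamma:[0,\infty)\m U$ with $f\circ\gamma$ proper (smoothness of $U$ coming from Lashof's result, or from smoothability in low dimensions). Your use of Brown's theorem in place of contractibility plus Lashof is a nice simplification of that preliminary step, but it does not remove the need for a tube-based construction of $e$ itself.
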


\begin{proof}We start by proving the first part of the lemma. If $M$ were smooth, such an embedding $e$ can be obtained using tubular neighborhood $\nu$ of a proper embedding $\gamma \colon [0,\infty) \m M$ as follows. View $[0,\infty)$ as $\{(x_1,x_2, \ldots,x_n) \in \R^n\, |\, x_1 \geq 0, 0=x_2=\ldots =x_n \}$. To get a local model which will be convenient for constructing the desired embedding, note that the tubular neighborhood $\nu$ is diffeomorphic to $N_1([0,\infty)) = \{\vec x \in \R^n|\inf_{\vec y \in [0,\infty)} d(\vec x,\vec y)<1\}$. In this local model, $e \colon N_1([0,\infty)) \sqcup \R^n \to N_1([0,\infty))$ is given as follows: on $\R^n$ we use a diffeomorphism $\R^n \to N_{1/2}(\vec{0}) = \{\vec x \in \R^n|d(\vec x,\vec{0}) < 1/2\}$ and on $N_1([0,\infty))$ we use an embedding which in cylindrical coordinates $(x_1,\rho,\vec{\phi})$ is given by
\[e(x_1,\rho,\vec{\phi}) = (\lambda(x_1,\rho),\vec{\phi})\]
where $\lambda(-,-) \colon (-1,\infty) \times [0,1) \to (-1,\infty) \times [0,1)$ is an embedding that (i) is the identity on points $(x_1,r) \in (-1,\infty) \times [0,1)$ when $r \geq 2/3$ or $x_1 \leq -\sqrt{(2/3)^2-r^2}$, (ii) has image in the subset of points $(x_1,r) \in (-1,\infty) \times [0,1)$ where $r > 3/5$ or $x_1 < -\sqrt{(3/5)^2-r^2}$ and (iii) maps $(-1,\infty) \times \{0\}$ onto itself. See Figure \ref{figemb} for a picture of such an embedding.

\begin{figure}[h]
\includegraphics[width=12cm]{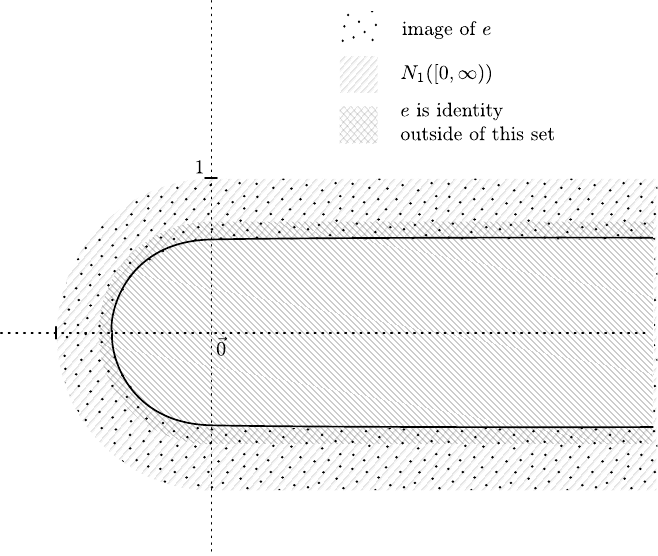}
\caption{The local model for the embedding $e$.}
\label{figemb}
\end{figure}

Note that $e$ is the identity near $\partial  N_1([0,\infty))$ and hence glues to the identity map on the complement of $\nu$ in $M$. In words, $e$ is obtained by pushing in $N_1([0,\infty))$ from infinity, making space of a copy of $\R^n$ near the origin. It is not hard to convince oneself that $e|_{N_1([0,\infty))}$ is isotopic to $\mr{id}_{N_1([0,\infty))}$ through embeddings that are the identity near $\partial  N_1([0,\infty))$.

If $\dim M \geq 5$, $M$ might not be smoothable. If so, take $U$ and $f$ as in Lemma \ref{lem_uexists} and remark that $U$ is smoothable by the Corollary in \cite{lashoficm}. Hence, if we apply the previous construction to an embedding $\gamma \colon [0,\infty) \m U$ such that $f \circ \gamma$ is proper (this is to guarantee that $\gamma$ leaves every compact set in $M$, not just every compact set in $U$), and extend by the identity to $M$, we get the desired embedding.

To obtain the second part of the lemma, we construct a handle decomposition obtained by attaching handles to the closure $\bar{\nu}$ of $\nu$ in $M$. Now apply the techniques of Lemma \ref{lem_xexists} again to $M$, but work relative to a Morse function that coincides on $\nu$ with the function $\sum_{i=1}^n x_i^2$ under the diffeomorphism $\nu \cong N_1([0,\infty))$. The result is a decomposition $\bar{N_i}$ of $M$ obtained by attaching finitely many handles to $\bar{\nu}$, none of which are $0$-handles. We can now take $\bar{M_i} = \bar{N_i} \cup \bar{\nu}_\infty$ where $\bar{\nu}_\infty$ is the compactification of $\bar{\nu} \cong \overline{N_1([0,\infty))} \subset \R^n$ obtained by allowing $x_1$ to have values in $(-\infty,\infty]$ instead of $(-\infty,\infty)$.\end{proof}

\begin{definition}
Fix an embedding $e \colon M \sqcup \R^n \m M$ as in Lemma \ref{lem_stabemb}. Let $T \colon \R^n \times C_k(M) \m  C_{k+1}(M) $ be the map $e'$ precomposed with the natural identification of $\R^n$ with $C_1(\R^n)$. Let $t \colon C_k(M) \m  C_{k+1}(M)$ be given by the formula $t(\vec x)=T(\mathbf 0,\vec x)$ with $\mathbf 0 \in \R^n$ the origin. 
\end{definition}

Note that these maps depend on the choice of the embedding $e$. We would like to stress the use of $\R^n$ in the codomain of $T$. Up to homotopy, it does not matter whether we use $\R^n$ or a point, but in Section \ref{secopen} we will use compactly supported cohomology, which is not a homotopy invariant. 

\section{Homology stability for configurations in $\R^n$}\label{Rn}

Fix an odd prime $p$ and natural number $n>2$. In this section we prove that the stabilization map $C_k(\R^n) \m C_{k+1}(\R^n)$ induces an isomorphism $H_i(C_k(\R^n);\F_p) \m H_i(C_{k+1}(\R^n);\F_p)$ if $i \leq k$. This will imply Theorem \ref{main} when $M =\R^n$.  Before we prove homological stability, we note that all we need to show is surjectivity because of the following result of McDuff (page 103 of \cite{Mc1}).

\begin{theorem}[McDuff]\label{injective}
Let $M$ be a non-compact connected manifold. The stabilization map $t \colon C_k(M) \m C_{k+1}(M)$ is injective on homology. 
\end{theorem}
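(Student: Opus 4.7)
My plan is to prove injectivity by reducing to an open-embedding question and then exploiting non-compactness via the push-off construction of Lemma \ref{lem_stabemb}.

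First, I would observe that $t$ factors as $C_k(M) \xrightarrow{s_0} \R^n \times C_k(M) \xrightarrow{T} C_{k+1}(M)$, where $s_0(\vec{x}) = (\mathbf{0},\vec{x})$ is the zero section. Since $\R^n$ is contractible, $s_0$ is a homotopy equivalence, so injectivity of $t_*$ is equivalent to injectivity of $T_*$.

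Second, I would note that $T$ is an open embedding between topological manifolds of the same dimension $(k+1)\dim M$. Indeed, by construction from the embedding $e: M\sqcup \R^n\hookrightarrow M$ of Lemma \ref{lem_stabemb}, the map $T$ is injective, a local homeomorphism, and its image $V\subset C_{k+1}(M)$ is the open subspace of configurations with exactly one point in $e(\R^n)$ and the remaining $k$ points in $e(M)$. The question thus becomes whether the inclusion $V\hookrightarrow C_{k+1}(M)$ induces an injection on homology.

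Third, using the non-compactness of $M$, I would iterate Lemma \ref{lem_stabemb} to obtain an embedding $M\hookrightarrow M$ that is isotopic to the identity but whose image is disjoint from $e(\R^n)$. Applying this coordinatewise produces a self-map of $C_{k+1}(M)$ isotopic to the identity whose image lies in $C_{k+1}(M)\setminus V$. Consequently the inclusion $C_{k+1}(M)\setminus V\hookrightarrow C_{k+1}(M)$ is a homology equivalence. I would combine this with the long exact sequence of the pair $(C_{k+1}(M), C_{k+1}(M)\setminus Z)$, where $Z\cong C_k(M)$ is the ``core'' closed submanifold of $V$ consisting of configurations with one point at $e(\mathbf{0})$. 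Excision identifies $H_*(C_{k+1}(M),C_{k+1}(M)\setminus Z)$ with $H_*(V, V\setminus Z)$, and the Thom isomorphism for the $\R^n$-bundle $V\to Z$ (with twisted coefficients if needed) identifies this with a shift of $H_*(C_k(M))$. A diagram chase combining these ingredients with the push-off retract then yields injectivity of $T_*$.

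The main obstacle is the final diagram chase: one must carefully exploit the push-off data to show that the relevant connecting homomorphism in the long exact sequence vanishes, and simultaneously handle orientability of $C_k(M)$, which may be non-orientable when $\dim M$ is odd and therefore requires twisted integer coefficients in the Thom step. McDuff's original argument on page 103 of \cite{Mc1} handles these technicalities directly via a stratification of $C_{k+1}(M)$ by the number of points near the stabilization site.
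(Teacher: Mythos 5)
There is a genuine gap, and it sits exactly where you defer to a ``final diagram chase.'' The reduction to the open embedding $T$ and the push-off observation are fine, but they only give that $H_*(C_{k+1}(M)\setminus V)\to H_*(C_{k+1}(M))$ is \emph{split surjective}; the stronger claim that this inclusion is a homology equivalence does not follow. More seriously, the ingredients you assemble cannot produce injectivity of $t_*$ for structural reasons. The excision/Thom identification $H_i\bigl(C_{k+1}(M),C_{k+1}(M)\setminus Z\bigr)\cong H_{i-n}(C_k(M))$ shifts degree by $n=\dim M$, so the only map it furnishes out of $H_i(C_{k+1}(M))$ lands in $H_{i-n}(C_k(M))$ and cannot serve as a left inverse to $t_*$. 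Worse, since $t$ factors through the core $Z$ and the normal bundle of $Z$ in $V\cong\R^n\times C_k(M)$ is trivial, the composite $H_i(C_k(M))\xrightarrow{t_*}H_i(C_{k+1}(M))\to H_i\bigl(C_{k+1}(M),C_{k+1}(M)\setminus Z\bigr)\cong H_{i-n}(C_k(M))$ is cap product with the Euler class of a trivial rank-$n$ bundle, hence zero: this map detects nothing in the image of $t_*$. Likewise, the vanishing of the connecting homomorphism you hope to prove would only show that $H_*(C_{k+1}(M)\setminus Z)\to H_*(C_{k+1}(M))$ is injective, which is a statement about the complement of $Z$ and says nothing about injectivity on $H_*(V)$, precisely because $t_*$ factors through the part you removed. (Two smaller points: $Z$ as you define it, with the remaining $k$ points constrained to the open set $e(M)$, need not be closed in $C_{k+1}(M)$, so the excision step needs care; and the twisted-coefficient worry is vacuous here since the relevant normal bundle is trivial.)

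The missing idea is a \emph{degree-preserving} wrong-way map, and that is exactly what McDuff's proof supplies: the transfer $H_*(C_{k+1}(M))\to H_*(C_k(M))$ obtained by deleting a point in all possible ways, via $C_{k+1}(M)\m (C_k(M))^{k+1}/\Sigma_{k+1}$ followed by summing the resulting chains, whose interaction with the stabilization map yields injectivity (in fact with coefficients in any abelian group). This is the route the paper takes, quoting page 103 of \cite{Mc1} and sketching the transfer construction; your closing description of McDuff's argument as a stratification of $C_{k+1}(M)$ by the number of points near the stabilization site does not reflect her proof, and the stratification by points in $X$ is used in this paper only later, for the compactly supported cohomology argument, with injectivity taken as input.
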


In fact McDuff's proof makes clear that the map is injective for homology with coefficients in any abelian group. McDuff's theorem is proven by studying the interaction between the stabilization map and the so-called transfer map, whose construction we will sketch now. One cannot define a map $C_k(M) \m C_{k-1}(M)$ by deleting a point since the points are unordered. However, one can define a map $C_k(M) \m (C_{k-1}(M))^k/\Sigma_k$ by deleting a point in all possible ways. For any space $X$, there is a natural map $H_*(X^k/\Sigma_k) \m H_*(X)$ induced by viewing a chain in $X^k/\Sigma_k$ as $k$ chains in $X$ and adding these. The composition of these two maps is called the transfer map. The transfer map is central to the homological stability results of \cite{Mc1}, \cite{Ch} and \cite{RW}, but does not factor as prominently in our proof or in \cite{Se}.

Next we recall homology operations for the homology of configuration spaces. These operations are present in the homology of any algebra over an $E_n$-operad, hence in our case because $\bigsqcup C_k(M)$ is the free $E_n$-algebra on a point. These operations were used by Cohen in \cite{CLM} to explicitly compute $H_*(C(\R^n);\F_p)$. We will use this explicit calculation to prove homological stability.

On page 213 and 217 of \cite{CLM} (also see Section 16.1.2 of \cite{GKRW1}) one finds the following list of homology operations for $p$ odd.\begin{enumerate}
\item A multiplication map: \[\bullet \colon H_q(C_k(\R^n);\F_p) \times H_r(C_j(\R^n);\F_p)  \m H_{q+r}(C_{k+j}(\R^n);\F_p).\]
\item Dyer-Lashof operations: \[Q^s \colon H_q(C_k(\R^n);\F_p) \m  H_{q+2s(p-1)}(C_{pk}(\R^n);\F_p)\]\[ \text{   for } 2s > q \text{ and } 2s-q \leq n-2.\]
\item Dyer-Lashof operations composed with homology Bockstein: \[\beta Q^s \colon H_q(C_k(\R^n);\F_p) \m  H_{q+2s(p-1)-1}(C_{pk}(\R^n);\F_p)\]\[ \text{   for } 2s > q \text{ and } 2s-q \leq n-2.\]
\item A ``top'' operation: \[\xi \colon H_q(C_k(\R^n);\F_p) \m  H_{pq+(n-1)(p-1)}(C_{pk}(\R^n);\F_p)\]\[ \text{   for } n+q \text{ odd}.\]
\item A ``top'' operation composed with a homology Bockstein: \[\beta \xi \colon H_q(C_k(\R^n);\F_p) \m  H_{pq+(n-1)(p-1)-1}(C_{pk}(\R^n);\F_p)\]\[ \text{   for } n+q \text{ odd}.\]
\item A Browder operation: \[\lambda \colon H_q(C_k(\R^n);\F_p) \otimes H_r(C_j(\R^n);\F_p) \m H_{q+r+n-1}(C_{k+j}(\R^n);\F_p).\]
\end{enumerate} 

The operation $\bullet$ is the Pontryagin product associated to an $H$-space structure on the configuration spaces. We write $ab$ for $\bullet(a,b)$. The operations $Q^s$ are the Dyer-Lashof operations and were introduced in \cite{KA} for the prime $2$ and in \cite{DL} for odd primes. The ``top'' operation is often treated as an ``honorary'' Dyer--Lashof operation as it behaves similarly terms of degree shifts, though it has slightly different algebraic properties. Here $\beta$ is the homology Bockstein coming from the short exact sequence $0 \m \mathbb \F_p \m \mathbb Z/p^2\Z \m \F_p \m 0$ of coefficients. The operation $\lambda$ is the Browder operation and was introduced in \cite{Br}. The statement regarding how these operations affect the number of particles is implicit in \cite{CLM} and explicit in Proposition A.4 of \cite{Se} and Section 16.1.2 of \cite{GKRW1}. Note that we exclude the case $q=2s$ where the operation $Q^s$ agrees with the $p$th power map and hence is redundant with the multiplication map. 

Using these homology operations, Cohen calculated the homology of $C(\R^n)$ (see \cite[Theorem 3.1]{CLM}). 

\begin{theorem}[Cohen] \label{explicit}
Fix an odd prime $p$ and let $e \in H_0(C_1(\R^n);\F_p)$ be the class of a point. Let $E=\{e\}$ if $n$ is odd and $\{e,\lambda(e,e)\}$ if $n$ is even. Let $S$ be the set of formal symbols constructed by iterated formal applications of $Q^s$, $\beta Q^s$, $\xi$, and $\beta \xi$ to elements of $E$ (e.g. $Q^3 \beta Q^5 e$ is an element of $S$). Note that we allow zero applications of $Q^s$ and $\beta Q^s$ to elements of $E$ so in particular we consider elements of $E$ to be elements of $S$. There is a subset $G_n \subset S$ such that $H_*(C(\R^n);\F_p)$ is isomorphic as a ring to the free graded commutative algebra on $G_n$. Each element of $G_n$ corresponds to an element of $H_i(C_k(\R^n);\F_p)$ where $i$ and $k$ are computed via the formulas above.

\end{theorem}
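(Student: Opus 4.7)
The plan is to exploit the equivalence of $\bigsqcup_k C_k(\R^n)$ with the free algebra on a point over the little $n$-disks operad $\mathcal{D}_n$: under this equivalence, $H_*(C(\R^n); \F_p)$ is the free algebra over $H_*(\mathcal{D}_n; \F_p)$ generated by the fundamental class $e \in H_0(C_1(\R^n); \F_p)$. The four listed operations (multiplication, Dyer--Lashof, Bockstein-Dyer--Lashof and Browder), together with Adem-type relations, generate the homology of the little $n$-disks operad.

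First I would invoke May's approximation theorem to identify the group completion of $\bigsqcup_k C_k(\R^n)$ with $\Omega^n S^n$. This reduces the additive computation to the classical computation of $H_*(\Omega^n S^n; \F_p)$, which proceeds by iterated use of the path-loop fibration and Kudo's transgression theorem. The outcome is a free graded commutative algebra on explicit generators of the form $Q^{s_1}\beta^{\epsilon_1}\cdots Q^{s_r}\beta^{\epsilon_r}(x)$, with $x$ either the fundamental class $e$ or, when $n$ is even, the top Browder bracket $\lambda(e,e)$, which appears as a new primitive class recording the failure of $e$ to strictly commute with itself.

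Next I would transfer this computation back to $C(\R^n)$ by verifying that, component by component, the canonical map $\bigsqcup_k C_k(\R^n) \to \Omega^n S^n$ is a homology equivalence onto the corresponding component of the target (after indexing by $\pi_0$). Since $\pi_0$ of the source is $\N$ while $\pi_0$ of the target is $\Z$, and since each generator of the free algebra lives in strictly positive particle number $k$, the identification of generators is unambiguous; the $(i,k)$-bigrading is then computed from the formulas $Q^s$ multiplies $k$ by $p$ and raises $i$ by $2s(p-1)$, with $\beta$ subtracting $1$ from $i$.

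The main obstacle is to isolate the correct subset $G_n \subset S$: which formal iterated compositions of $Q^s$ and $\beta Q^s$ applied to $E$ are independent and which are forced by Adem relations (or, in the even case, by the Jacobi and Leibniz identities for $\lambda$). This is exactly what the unstability conditions $2s > q$ and $2s - q \leq n$ encode; confirming that the resulting count matches the Poincar\'e series of $H_*(\Omega^n_0 S^n; \F_p)$ is the main combinatorial content of \cite{CLM}, and will require the full Adem relation bookkeeping (in particular, the $E_n$-specific constraint $2s - q \leq n$ that distinguishes the finite loop space case from $E_\infty$).
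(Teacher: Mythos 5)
This statement is not proved in the paper at all: it is quoted verbatim from Cohen's computation (page 227 of \cite{CLM}), so there is no internal argument to compare against, and you are in effect sketching a re-proof of Cohen's theorem. Your outline follows the standard route (May's approximation theorem, the computation of $H_*(\Omega^n S^n;\F_p)$ by Dyer--Lashof/Browder operations, admissibility and excess bookkeeping), but it contains one step that is genuinely false as stated: the claim that the canonical map $\bigsqcup_k C_k(\R^n) \to \Omega^n S^n$ is, component by component, a homology equivalence onto the corresponding component. It is not: $C_1(\R^n)$ is contractible and $C_2(\R^n)\simeq \R P^{n-1}$, while every component of $\Omega^n S^n$ has the (large) homology of $\Omega^n_0 S^n$. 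The approximation map is only a group completion, i.e.\ a homology equivalence after stabilizing $k\to\infty$; indeed, if your claim held, homological stability for $C_k(\R^n)$ would be a triviality and the range statements of this paper would be empty of content. Moreover, even knowing $H_*(\Omega^n_0 S^n;\F_p)$ does not by itself produce the bigraded statement you need, since the target has no particle-number grading: the assignment of each generator of $G_n$ to a specific $H_i(C_k(\R^n);\F_p)$ is exactly the extra information in Cohen's theorem.

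To repair the transfer step you need an argument that remembers the weight (particle-number) filtration. Two standard options: (i) use the May filtration of the free $E_n$-space $C(\R^n;X)$ together with the stable (Snaith) splitting $\Sigma^\infty C(\R^n;X)\simeq \bigvee_k \Sigma^\infty D_k(X)$, where for $X=S^0$ the filtration quotient $D_k(S^0)$ is $C_k(\R^n)_+$, and deduce the bigraded answer from the computation for connected $X$ (where the approximation theorem does give an equivalence); or (ii) follow Cohen's direct identification of $H_*$ of a free $E_n$-algebra as the free object in the category of allowable algebras over the Dyer--Lashof operations, Bocksteins, Browder bracket and product, which carries the weight grading from the start and is what the formulas for how $Q^s$, $\beta$, $\lambda$ and $\bullet$ change $(i,k)$ refer to. With either repair your sketch becomes the standard proof; without it, the passage from loop-space homology back to the individual spaces $C_k(\R^n)$ does not go through.
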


Cohen also explicitly described $G_n$  but this will not be needed. Moreover, he computed the homology of configuration spaces of point in $\R^n$ labeled in an arbitrary pointed topological space $X$ with a basepoint relation allowing points to vanish if they are labeled by the basepoint. We are now already ready to prove homological stability for configuration spaces of points in $\R^n$.
\begin{proposition} \label{propRn}
Let $p$ be an odd prime and $n>2$. Then the stabilization map induces an isomorphism $H_i(C_k(\R^n);\F_p) \m H_i(C_{k+1}(\R^n);\F_p)$ if $i \leq k$.

\end{proposition}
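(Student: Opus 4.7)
The plan is to combine McDuff's injectivity (Theorem~\ref{injective}) with Cohen's structure theorem (Theorem~\ref{explicit}) to reduce the problem to a combinatorial check. Since $t_\ast$ is automatically injective with any coefficients, it suffices to verify surjectivity in the stated range. The embedding defining $t$ adds one particle disjointly from the configuration, so $t_\ast$ coincides with Pontryagin multiplication by the class $e \in H_0(C_1(\R^n);\F_p)$ of a point in the graded ring $H_\ast(C(\R^n);\F_p)$. Thus the claim becomes: every homology class in $H_i(C_{k+1}(\R^n);\F_p)$ with $i \leq k$ is divisible by $e$.

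By Theorem~\ref{explicit}, a vector-space basis for $H_\ast(C(\R^n);\F_p)$ is provided by monomials in the generators $g \in G_n$, each having a well-defined particle number $k(g)$ and homological degree $i(g)$. For a monomial $m = g_1^{e_1}\cdots g_r^{e_r}$, set $k(m) = \sum e_j k(g_j)$ and $i(m) = \sum e_j i(g_j)$. The key lemma I will prove is
\[ i(g) \geq k(g) \quad \text{for every } g \in G_n \setminus \{e\}. \]
Granting this, any monomial $m$ with no $e$-factor inherits $i(m) \geq k(m)$ by summing over its factors, so any monomial with $k(m) = k+1$ and $i(m) = i \leq k < k+1$ must contain a factor of $e$. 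Writing such a monomial as $e \cdot m'$ displays it in the image of $t_\ast$, completing the proof.

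To establish the key lemma I will induct on the depth of the iterated Dyer-Lashof expression for $g$. The base cases are: $\lambda(e,e)$ when $n$ is even, where $(k,i) = (2, n-1)$ and $n \geq 4$ gives $i - k \geq 1$; the classes $Q^s e$, where $(k,i) = (p, 2s(p-1))$ with $s \geq 1$, giving $i - k \geq p - 2 \geq 1$; and $\beta Q^s e$, where $i - k \geq p - 3 \geq 0$ for $p \geq 3$ (tight when $p = 3$, $s = 1$). For the inductive step, set $\delta(g) := i(g) - k(g)$ and compute
\[ \delta(Q^s g) = \delta(g) + (p-1)\bigl(2s - k(g)\bigr), \qquad \delta(\beta Q^s g) = \delta(Q^s g) - 1. \]
The constraint $2s > i(g) \geq k(g)$ (using the inductive hypothesis $\delta(g) \geq 0$) yields $2s - k(g) \geq 1$, hence $\delta(Q^s g) \geq p - 1$ and $\delta(\beta Q^s g) \geq p - 2$, both nonnegative for $p \geq 3$.

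The main place requiring care is the tightness of the base case $\beta Q^1 e$ at $p = 3$, where $\delta = 0$: this forces one to verify that the inductive computation never degrades below zero under either $Q^s$ or $\beta Q^s$. Once the displayed formulas for $\delta$ are set up, however, the inductive step is immediate from the operation constraints, and no information about the explicit subset $G_n \subset S$ is needed beyond the fact that its elements are obtained from $E$ by iterated Dyer-Lashof operations.
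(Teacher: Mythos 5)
Your proposal is correct and follows essentially the same route as the paper: injectivity from McDuff, identification of $t_*$ with multiplication by $e$, and Cohen's description of $H_*(C(\R^n);\F_p)$, reduced to showing every generator other than $e$ satisfies $i(g)\geq k(g)$ (the paper's ``unstable'' condition) and that this property persists under products and iterated $Q^s$, $\beta Q^s$. Your $\delta$-bookkeeping is just a slightly more formalized version of the paper's facts (i)--(iii), with the same base cases and the same use of the constraint $2s>i(g)$.
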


\begin{proof}
From the operadic construction of these homology operations in \cite{CLM}, it is clear that $t_* \colon H_i(C_k(\R^n);\F_p) \m H_i(C_{k+1}(\R^n);\F_p)$ is the multiplication by $e$ map. Since the stabilization map is injective (see Theorem \ref{injective}, though it also follows from Theorem \ref{explicit}), it suffices to show that for all $x \in H_i(C_{k+1}(\R^n);\F_p)$ with $i \leq k$, we have that $x=ey$ for some $y \in H_i(C_{k}(\R^n);\F_p) $. We call a homology class $z \in H_i(C_j(M);\F_p)$ \emph{unstable} if $i \geq j$. We will need the following corollaries of how homology operation affect homological degree and number of particles. \begin{enumerate}[(i)]
\item The classes $\beta Q^s e $, $ Q^s e $, $\beta \xi e$, $\xi e$ and $\lambda(e,e)$ are unstable. This is because $2s(p-1) \geq p$, $2s(p-1)-1 \geq p$, $(n-1)(p-1)-1 \geq p$, $(n-1)(p-1) \geq p$, and $n-1 \geq 2$ if $p \geq 3$, $n \geq 3$ and $s \geq 1$.

\item If $z \in H_i(C_{j}(\R^n);\F_p) $ is unstable then $Q^s z$, $\beta Q^s z$, $\xi z$, and $\beta \xi z$ are unstable. This is because $i+2s(p-1) \geq pj$, $i+2s(p-1) -1 \geq pj$, $p i+(n-1)(p-1) \geq pj$ and $p i+(n-1)(p-1)-1 \geq pj$ if $p \geq 3$, $n \geq 3$, $2s \geq i+1$ and $i \geq j$.

\item If $z \in H_i(C_{j}(\R^n);\F_p)$, $w \in H_u(C_{v}(\R^n);\F_p)$ are unstable, then so is $zw$. This is because $i+u \geq j+v$ if $i \geq j$ and $u \geq v$.

\end{enumerate} The first two facts imply that $e$ is the only element of $G_n$ that is not unstable. By the third fact, for a product of elements of $G_n$ to have corresponding number of particles larger than its homological degree, the product must contain a non-zero number of $e$'s. The proposition now follows by Theorem \ref{explicit}.\end{proof}


\begin{remark}
A similar argument also shows that $t_* \colon H_i(C_k(\R^n);\F) \m H_i(C_{k+1}(\R^n);\F)$ is an isomorphism for $i \leq k/2$ with $\F=\Q$ or $\F_p$ with $p$ or $n$ possibly equal to $2$. Likewise, one can show $t_* \colon H_i(C_k(\R^n);\Q) \m H_i(C_{k+1}(\R^n);\Q)$ is an isomorphism for $i \leq k$ provided $n > 2$. 
\end{remark}

\section{Homology stability for configurations in an open manifold} \label{secopen}

In this section we prove homological stability for configuration spaces of particles in a non-compact connected manifold $M$ with an improved range for homology with coefficients in $\Z[1/2]$. Before we prove homological stability, we note that when $M$ is the interior of a manifold with a finite handle decomposition, then the homology of the configuration space is finitely generated. 

\begin{lemma}\label{lem_homfg} Let $R$ be a ring and $M$ be the interior of an $n$-dimensional manifold $\bar{M}$ admitting a finite handle decomposition, then $H_i(C_k(M);R)$ is a finitely-generated $R$-module for all $i \geq 0$.\end{lemma}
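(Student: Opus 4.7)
The plan is to show that $C_k(M)$ has the homotopy type of a finite CW complex. This implies the desired finite generation of $H_i(C_k(M);R)$ for the coefficient rings of interest (in particular $\Z$, $\Z[1/2]$, and $\F_p$, which are Noetherian): the cellular chain complex of a finite CW model is a bounded complex of finitely generated free $R$-modules, so both kernels and images of differentials are finitely generated $R$-modules.

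First I would observe that $M$ itself has the homotopy type of a finite CW complex: the finite handle decomposition endows $\bar M$ with such a structure, and Brown's collar neighborhood theorem for topological manifolds gives $M \simeq \bar M$. More generally, for any finite set of interior points $\{p_1,\ldots,p_j\} \subset M$, the complement $M \setminus \{p_1,\ldots,p_j\}$ also has finite CW type: removing small disjoint closed disks around the $p_i$ from $\bar M$ produces another compact manifold with a finite handle decomposition (obtained from that of $\bar M$ by adjoining handles dual to the removed disks), and its interior deformation retracts onto $M \setminus \{p_1,\ldots,p_j\}$.

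Next I would induct on $k$ using the Fadell--Neuwirth fiber bundle $F_k(M) \to F_{k-1}(M)$, whose fiber over a configuration $(p_1,\ldots,p_{k-1})$ is the space $M \setminus \{p_1,\ldots,p_{k-1}\}$. By induction and the preceding paragraph, both base and fiber have finite CW type. A fiber bundle $p \colon E \to B$ with finite-CW-type base and fiber has finite-CW-type total space: pull $p$ back along a homotopy equivalence $B' \simeq B$ from a finite CW complex $B'$, then assemble the pulled-back total space over $B'$ by induction on the cells, using triviality of the bundle over each cell to write it as a finite iterated homotopy pushout of copies of $D^m \times F$ along $S^{m-1} \times F$. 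This shows $F_k(M)$ has finite CW type.

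To pass from $F_k(M)$ to $C_k(M) = F_k(M)/\Sigma_k$, one approach is to arrange the preceding construction $\Sigma_k$-equivariantly, producing a free cellular $\Sigma_k$-action on a finite CW model of $F_k(M)$, whose quotient is then a finite CW complex with one cell per $\Sigma_k$-orbit. An alternative is to use the Cartan--Leray spectral sequence
\[
H_p(\Sigma_k; H_q(F_k(M);R)) \Rightarrow H_{p+q}(C_k(M);R),
\]
whose $E_2$-page is finitely generated in each total degree (for Noetherian $R$), since $\Sigma_k$ is finite and each $H_q(F_k(M);R)$ is finitely generated. The main technical obstacle is the equivariance in the first approach; the spectral sequence approach avoids it but requires the Noetherian hypothesis on $R$.
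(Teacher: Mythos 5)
Your proposal is correct and follows essentially the same route as the paper's own (``elementary'') proof: induct on $k$ using Fadell--Neuwirth-type fibrations (the paper uses $F_{k-1}(N\setminus \mathrm{pt}) \to F_k(N) \to N$, you use $F_k(M)\to F_{k-1}(M)$, relying in either case on the fact that puncturing $M$ preserves the finite-handle-decomposition property), and then handle the free $\Sigma_k$-action on $F_k(M)$ by the Cartan--Leray/Serre spectral sequence, exactly as in your fallback option. The only real difference is cosmetic: the paper extracts finite generation of $H_*(F_k(M))$ directly from Serre spectral sequences with coefficients in local systems of finitely generated modules rather than through your finite-CW-homotopy-type intermediate step, and the Noetherian caveat you flag is harmless here since the coefficients actually used are $\F_p$ and $\Z[1/2]$.
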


\begin{proof} For $M$ smooth, this is proven in the proof of Theorem 4.5 of \cite{Mc1} using the scanning map to a certain space of sections. McDuff's definition of this space of sections only made sense for smooth manifolds as it used the tangent bundle. However, the space of sections can instead be constructed using factorization homology applied to the group completion of $C(\R^n)$, which makes sense for topological manifolds by the work of Ayala and Francis in \cite{Fr2}. Thus, her results can be extended to topological manifolds as well. 

Alternatively, there is the following elementary proof. If $F_k(-)$ denotes the ordered configuration space, there exist fibrations $F_{k-1}(N \backslash \mr{pt}) \m F_k(N) \m N$ for any manifold $N$ without boundary. Using the Serre spectral sequence, this allows one to inductively prove that $F_k(M)$ has finitely-generated $R$-modules as homology groups, since $M$ has finitely-generated $R$-modules as homology groups with coefficients in any local system of finitely generated $R$-modules, using the homotopy equivalence $M \simeq \bar{M}$ and the fact that the latter has a finite handle decomposition. Here we need to remark that if we remove points from the interior of a handle, the fact that $\R^n \backslash \{\text{finitely many points}\}$ is the interior of a manifold with finite handle decomposition implies the same is true for $M \backslash \{\text{finitely many points}\}$. Next there is a fibration $F_k(M) \m C_k(M) \m \Sigma_k$, and since the homology groups of the symmetric group with coefficients in a finitely-generated $R[\Sigma_k]$-module is a finitely-generated $R$-module, another Serre spectral sequence finishes the proof.\end{proof}

Since configuration spaces are manifolds, by Poincar\'e duality, homological stability is equivalent to stability for compactly supported cohomology (with possibly twisted coefficients). The space $C_k(M)$ is orientable if and only if the dimension of $M$ is even (or equal to $1$) and $M$ is orientable. For simplicity we assume that $M$ is even dimensional and orientable; however, one can work with twisted coefficients as is done in Appendix A of \cite{Se} to prove the result for manifolds that are either odd dimensional or non-orientable. In fact, our approach simplifies Segal's in the odd dimensional case, because one does not need to extend the orientation local systems on $C_k(M)$ to a local system on the symmetric product. 

The stabilization map $t \colon C_k(M) \m C_{k+1}(M)$ does not induce a map on compactly supported cohomology. However, the map $T \colon \R^n \times C_k(M) \m C_{k+1}(M)$ is an open embedding and hence induces a map $T_* \colon H^i_c(\R^n \times C_k(M)) \m H^i_c(C_{k+1}(M))$ via extension by zero. By Poincar\'e duality and Theorem \ref{injective}, for orientable even dimensional manifolds, the following proposition is equivalent to Theorem \ref{main}. 

\begin{proposition} Let $M$ be a non-compact connected orientable manifold of dimension $n$ with $n>2$ and even. The map 
\[T_*\colon H^i_c(\R^n \times C_k(M);\Z[1/2]) \m H^i_c(C_{k+1}(M);\Z[1/2])\]
is an isomorphism for $i\geq n(k+1)-k$.

 \label{dual}
\end{proposition}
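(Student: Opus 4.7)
The plan is to apply the long exact sequence in compactly supported cohomology for the open embedding $T$ with closed complement $Z = C_{k+1}(M) \setminus T(\R^n \times C_k(M))$:
\[\cdots \to H^{i-1}_c(Z;\Z[1/2]) \to H^i_c(\R^n \times C_k(M);\Z[1/2]) \xrightarrow{T_*} H^i_c(C_{k+1}(M);\Z[1/2]) \to H^i_c(Z;\Z[1/2]) \to \cdots \]
This reduces the proposition to the vanishing $H^j_c(Z;\Z[1/2]) = 0$ for $j \geq n(k+1)-k-1$. Since $H^*_c$ commutes with filtered colimits of open inclusions, and Lemma \ref{lem_homfg} gives finite generation, I would first reduce to the case when $\bar{M}$ admits a finite handle decomposition, using the exhaustion from Lemma \ref{lem_stabemb}.

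I would then analyze $Z$ using the subspace $X \subset M$ provided by Lemma \ref{lem_xexists}, which has dimension at most $n-1$ and satisfies $M \setminus X \cong \R^n$, arranged compatibly with the embedding $e$ so that $V := e(\R^n) \subset M \setminus X$ and $X \subset M' := e(M)$, and so that $e|_M$ preserves $X$ (hence $M'\setminus X \cong \R^n$ as well). I would filter $Z$ by the cardinality $j \in \{0,1,\ldots,k+1\}$ of the intersection of a configuration with $X$; the locally closed stratum $F_j$ of configurations with exactly $j$ points on $X$ is homeomorphic to the product $C_j(X) \times Y_{k+1-j}$, where $Y_m \subset C_m(\R^n)$ is the analog of $Z$ for the restricted stabilization map $T': \R^n \times C_{m-1}(\R^n) \to C_m(\R^n)$ induced by $e|_{M \setminus X}$.

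For $j=0$, $F_0 = Y_{k+1}$ is handled directly by the case $M = \R^n$: Proposition \ref{propRn} gives the $T'_*$-isomorphism with $\F_p$ coefficients at odd $p$ and the Remark after it gives the $\Q$ case, which assemble via universal coefficients (using Lemma \ref{lem_homfg}) and Poincaré duality (using that $n$ is even, so $C_k(\R^n)$ is orientable) into the $H^*_c$-isomorphism, yielding $H^j_c(Y_m;\Z[1/2]) = 0$ for $j \geq nm - m + 1$. For strata with $j \geq 1$, the Künneth formula, the dimension bound $\dim C_j(X) \leq j(n-1)$, and the just-obtained vanishing of $H^*_c(Y_{k+1-j};\Z[1/2])$ combine to give vanishing of $H^l_c(F_j;\Z[1/2])$ in the required range, and the spectral sequence of the filtration then yields $H^l_c(Z;\Z[1/2]) = 0$ for $l \geq n(k+1) - k - 1$.

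The main obstacle is the borderline degree $l = n(k+1) - k - 1 = (n-1)(k+1)$, at which the Künneth bound allows a potential non-zero contribution from $H^{j(n-1)}_c(C_j(X)) \otimes H^{(n-1)(k+1-j)}_c(Y_{k+1-j})$ with both factors possibly non-zero. Resolving this requires sharper top-degree information for $H^*_c(Y_m;\Z[1/2])$ from Cohen's explicit description in Theorem \ref{explicit}, exploiting that the only non-unstable generator with $\Z[1/2]$ coefficients is $e \in H_0(C_1(\R^n))$ --- alternatively, one refines the filtration to separate out the diagonal contribution. The restriction to $\Z[1/2]$ coefficients is essential: the slope-$1$ bound of Proposition \ref{propRn} relies on Dyer-Lashof operations available only at odd primes, and the analogous statement fails with $\F_2$ coefficients.
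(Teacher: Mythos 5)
There is a genuine gap, and it sits exactly at the point you flag as ``the main obstacle'' --- but the problem is worse than an obstacle: the statement you are trying to prove there is false. Your reduction asks for $H^j_c(Z;\Z[1/2])=0$ for all $j \geq n(k+1)-k-1$, because you want to extract both surjectivity \emph{and} injectivity of $T_*$ from the long exact sequence alone. Since $T_*$ is injective in every degree (Poincar\'e duality plus McDuff's Theorem \ref{injective}), the long exact sequence splits and $H^j_c(Z;\Z[1/2])$ is exactly the cokernel of $T_*$ in degree $j$; at the borderline degree $j = n(k+1)-k-1 = (n-1)(k+1)$ this is, by Poincar\'e duality, the cokernel of $t_*\colon H_{k+1}(C_k(M);\Z[1/2]) \m H_{k+1}(C_{k+1}(M);\Z[1/2])$, i.e.\ stability one degree beyond slope $1$, which fails. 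Concretely, take $M=\R^4$, $k=2$ (so $X=\emptyset$ and $Z=Y_3$): $H_3(C_2(\R^4);\Z[1/2]) \cong \Z[1/2]$ (as $C_2(\R^4)\simeq \R P^3$), while $H_3(C_3(\R^4);\Z[1/2])$ contains nontrivial $3$-torsion coming from $\beta Q^1 e$ in Cohen's description (Theorem \ref{explicit}), so the cokernel is nonzero and $H^9_c(Y_3;\Z[1/2]) \neq 0$. Hence no ``sharper top-degree information'' about $H^{(n-1)m}_c(Y_m)$ and no refinement of the filtration can deliver the vanishing you need in that degree: the class you are worried about is really there.

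The repair is precisely the move the paper makes: do not ask the long exact sequence for injectivity. Your stratum estimates (K\"unneth, $\dim C_j(X) \leq j(n-1)$, and $H^b_c(Y_m;\Z[1/2])=0$ for $b \geq (n-1)m+1$) do give $H^l_c(Z;\Z[1/2])=0$ for $l \geq n(k+1)-k$, which yields surjectivity of $T_*$ in the stated range; injectivity then holds in every degree and with arbitrary coefficients by McDuff's transfer argument, Theorem \ref{injective}, combined with Poincar\'e duality --- an input your proposal never invokes but which is indispensable at the borderline degree. With that substitution your argument becomes an honest variant of the paper's: the paper compares the filtrations of $\R^n\times C_k(M)$ and $C_{k+1}(M)$ by the number of points in $X$ directly and runs a downward induction with the five lemma (Proposition \ref{induction}), working stratum by stratum with $\F_p$-coefficients and only lifting to $\Z[1/2]$ at the end via Lemma \ref{lem_homfg}, mapping cones, and the structure theorem, whereas you stratify the complement $Z$ of the image of $T$ and work over $\Z[1/2]$ throughout; the latter is workable but obliges you to justify the K\"unneth formula over $\Z[1/2]$ rather than over a field, and to arrange the compatibilities $e(X)=X$ and $e(M\backslash X)\subset M\backslash X$ (which the paper also needs and secures through Lemmas \ref{lem_xexists} and \ref{lem_stabemb}).
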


Compactly supported cohomology is convenient because of the following long exact sequence. One reference is III.7.6 of \cite{iversen}, which uses sheaf cohomology. However Theorem III.1.1 of \cite{bredonsheaf} says that (compactly supported) sheaf cohomology coincides with (compactly supported) singular cohomology for locally path-connected Hausdorff (locally compact) spaces.

\begin{proposition} \label{exact} \label{exactseq} Let $R$ be an abelian group, $Y$ be a locally compact and locally path-connected Hausdorff space and $C \subset Y$ a closed subspace that is also locally path-connected. Let $U = Y \backslash C$ denote its complement. There is a long exact sequence in compactly supported cohomology
\[\cdots \m H^*_c(U;R) \m H^*_c(Y;R) \m H^*_c(C;R) \m H^{*+1}_c(U;R) \m \cdots.\]
The group $R$ can also be replaced by a twisted system of coefficients on $Y$. 
\end{proposition}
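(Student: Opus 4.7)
The plan is to reduce the statement to the corresponding long exact sequence for compactly supported sheaf cohomology and then transport it to singular cohomology using the comparison theorem already cited from \cite{bredonsheaf}.

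First, I would verify that the topological hypotheses descend to both subspaces. Being a closed subspace of a locally compact Hausdorff space, $C$ is itself locally compact Hausdorff, and locally path-connected by assumption. Being open in $Y$, the complement $U = Y \setminus C$ inherits local compactness, the Hausdorff property, and local path-connectedness from $Y$. Thus each of $Y$, $C$, and $U$ satisfies the hypotheses of Theorem III.1.1 of \cite{bredonsheaf}, which identifies compactly supported sheaf cohomology with coefficients in a constant sheaf (or more generally a locally constant sheaf) with compactly supported singular cohomology in the corresponding coefficients.

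Next, I would invoke III.7.6 of \cite{iversen}, which, for any locally compact Hausdorff space $Y$, closed subspace $C$ with open complement $U$, and sheaf $\mathcal{F}$ of abelian groups on $Y$, produces a long exact sequence
\[\cdots \m H^*_c(U;\mathcal{F}|_U) \m H^*_c(Y;\mathcal{F}) \m H^*_c(C;\mathcal{F}|_C) \m H^{*+1}_c(U;\mathcal{F}|_U) \m \cdots\]
in compactly supported sheaf cohomology. Specializing $\mathcal{F}$ to the constant sheaf with stalk $R$, or to the locally constant sheaf associated with a local coefficient system on $Y$, yields a sequence of exactly the form appearing in the proposition, but in sheaf cohomology.

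Combining these two ingredients gives the sequence on the singular side. The main, if modest, obstacle is checking that Bredon's comparison isomorphism is natural with respect to the open inclusion $U \hookrightarrow Y$ and the closed inclusion $C \hookrightarrow Y$, so that it intertwines the sheaf-theoretic extension-by-zero and restriction maps with their singular counterparts. This naturality is part of the content of the cited theorem, since both comparison maps are induced by the canonical map from singular cochains to the Godement-style flabby resolution and this construction is functorial in continuous maps. Once naturality under the two inclusions is established, exactness forces the connecting homomorphisms to correspond, and the long exact sequences match term by term. The twisted case is handled identically using the locally constant sheaf attached to the given local system on $Y$ and its restrictions to $C$ and $U$.
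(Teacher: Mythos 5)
Your proposal follows exactly the route the paper takes: it cites III.7.6 of \cite{iversen} for the long exact sequence in compactly supported sheaf cohomology and Theorem III.1.1 of \cite{bredonsheaf} to identify this with compactly supported singular cohomology for locally compact, locally path-connected Hausdorff spaces. Your additional checks that $C$ and $U$ inherit the relevant hypotheses and that the comparison is natural are sensible elaborations of the same argument.
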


Suppose that $M$ is the interior of $\bar{M}$, a compact $n$-dimensional manifold with boundary which admits a finite handle decomposition with a single $0$-handle. Pick $X \subset M$ as in Lemma \ref{lem_xexists}. Let $\mathcal{G}_k^j$ be the subspace of $C_k(M)$ where there are at least $j$ points in $X$. Note that each $\mathcal{G}_k^j$ is closed,  $\mathcal{G}_k^j \supset \mathcal{G}_k^{j+1}$, $\G_k^0=C_k(M)$, $\G_k^{j}$ is empty for $j>k$ and $\mathcal{G}_k^j - \mathcal{G}_k^{j+1}$ is homeomorphic to $C_{k-j}(M \backslash X) \times C_{j}(X)$.

For our choice of embedding, the stabilization map restricts to an open embedding $T\colon \R^n \times \G_k^j \m \G^{j}_{k+1}$ which further restricts to an open embedding: \[T \times \mr{id} \colon \R^n \times C_{k-j}(M \backslash X) \times C_{j}(X) \m C_{k-j+1}(M \backslash X) \times C_{j}(X).\] To prove Proposition \ref{dual} we will need the following proposition in the case $j=0$.

\begin{proposition}
\label{induction}
Let $M$ be the interior of an $n$-dimensional orientable manifold admitting a finite handle decomposition with a single $0$-handle, $X$ as in Lemma \ref{lem_xexists} and suppose $n>2$ and even. Fix an odd prime $p$, then the map $T_* \colon H^i_c(\R^n \times \G_k^j;\F_p) \m H^i_c(\G_{k+1}^{j};\F_p)$  is an isomorphism for $i>n(k+1)-k$ and a surjection for $i=n(k+1)-k$.
\end{proposition}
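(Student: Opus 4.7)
The plan is to prove this by downward induction on $j$, from $j = k+1$ down to $j = 0$, using the long exact sequence in compactly supported cohomology from Proposition~\ref{exactseq} applied to the closed pair $\G_k^{j+1} \subset \G_k^j$ and its analogue for $k+1$. The open complement $\G_k^j \backslash \G_k^{j+1} \cong C_{k-j}(\R^n) \times C_j(X)$ is already identified in the excerpt, and since $T$ respects this stratification, one obtains a commutative ladder of long exact sequences on which to apply the five lemma.

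For the base case $j = k+1$, the source $\R^n \times \G_k^{k+1}$ is empty, while $\G_{k+1}^{k+1} = C_{k+1}(X)$. Since $X$ is an open subset of a finite complex of dimension at most $n-1$, its $(k+1)$-fold configuration space has covering dimension at most $(k+1)(n-1)$, so $H^i_c(C_{k+1}(X); \F_p)$ vanishes for $i > (k+1)(n-1) = n(k+1)-(k+1)$, and in particular throughout the range $i \geq n(k+1)-k$.

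For the inductive step, the five lemma reduces the claim to showing that the cross-term map
\[(T \times \mr{id})_*\colon H^i_c(\R^n \times C_{k-j}(\R^n) \times C_j(X); \F_p) \longrightarrow H^i_c(C_{k-j+1}(\R^n) \times C_j(X); \F_p)\]
is an isomorphism for $i \geq n(k+1) - k$. By K\"unneth over $\F_p$, this decomposes as a sum of $T_* \otimes \mr{id}$ over $b + c = i$, where $T_*\colon H^b_c(\R^n \times C_{k-j}(\R^n); \F_p) \to H^b_c(C_{k-j+1}(\R^n); \F_p)$. Both spaces are orientable $n(k-j+1)$-manifolds (using that $n$ is even, which makes each $C_m(\R^n)$ orientable), so Poincar\'e duality identifies $T_*$ with the stabilization on $H_{n(k-j+1)-b}$, which by Proposition~\ref{propRn} is an isomorphism once $n(k-j+1) - b \leq k - j$. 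Since $H^c_c(C_j(X); \F_p)$ vanishes for $c > j(n-1)$, a direct arithmetic check shows that the condition on $b = i - c$ holds across all relevant $c$ exactly when $i \geq n(k+1) - k$, yielding the desired cross-term isomorphism.

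The main obstacle I foresee is the boundary behavior at $i = n(k+1) - k$, where the inductive hypothesis only provides a surjection at the $\G^{j+1}$ position of the ladder. The saving point is that the cross-term estimate gives a \emph{full} isomorphism already at $i = n(k+1)-k$, one degree better than the conclusion we are proving for $\G^j$; this slack is precisely what lets the iso/surj pattern propagate through the five lemma. The K\"unneth and Poincar\'e duality manipulations are otherwise routine, but the three-way bookkeeping (the $n$-shift from the $\R^n$ factor, the $n(k-j+1)-b$ flip from Poincar\'e duality, and the $b + c = i$ decomposition from K\"unneth) is where an off-by-one would be easiest to introduce and must be checked carefully.
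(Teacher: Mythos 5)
Your proposal is correct and takes essentially the same route as the paper: downward induction on $j$ using the long exact sequence in compactly supported cohomology, with the stratum (cross-term) map handled by the K\"unneth formula, Poincar\'e duality, Proposition \ref{propRn}, and the vanishing of $H^c_c(C_j(X);\F_p)$ for $c>(n-1)j$, followed by the five lemma with exactly the iso/surjection bookkeeping you describe. The only cosmetic differences are that the paper starts the induction at $j>k+1$ (where both spaces are empty) instead of treating $j=k+1$ as a separate base case, proves the vanishing for $C_j(X)$ by filtering by the number of points in the last-attached cell rather than by a covering-dimension bound, and obtains injectivity of the cross-term map from Theorem \ref{injective} rather than summand-wise from the K\"unneth decomposition.
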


\begin{proof}
We will first that show that \[(T \times \mr{id})_* \colon H^i_c(\R^n \times C_{k-j}(M \backslash X) \times C_{j}(X);\F_p) \m H^i_c(C_{k-j+1}(M \backslash X) \times C_{j}(X);\F_p)\]   is an isomorphism for $i \geq n(k+1)-k$ and $p$ odd. Recall that $M \backslash X$ is homeomorphic to $\R^n$.  By Proposition \ref{propRn} and Poincar\'e duality, we have that \[T_* \colon H^i_c(\R^n \times C_{k-j}(M \backslash X);\F_p)  \m H^i_c(C_{k-j+1}(M \backslash X);\F_p)\] is an isomorphism for $i \geq n(k-j+1)-k+j$.  Since the dimension of $X$ is $\leq n-1$, $H^i_c(C_{j}( X);\F_p)=0$ for $i > (n-1)j$. This is clearly true if $X$ has one cell, and can be proven inductively by filtering $C_j(X)$ by the number of points in the cell that is attached last.
 
Since we are working over a field, the K\"{u}nneth formula holds (see II.15.2 of \cite{bredonsheaf} for the K\"{u}nneth formula for compactly-supported sheaf cohomology, which coincides with compactly-supported singular cohomology in our setting by the remarks preceding Proposition \ref{exact}). Suppose $x \in H^a_c(C_{k-j+1}(M \backslash X);\F_p)$, $y \in H^b_c(C_{j}( X);\F_p)$ and $x \otimes y$ is not in the image of $(T \times \mr{id})_*$. Since $y \neq 0$, we have $b \leq (n-1)(j)$. Since $x \notin T_*(H^i_c(\R^n \times C_{k-j}(M \backslash X);\F_p) )$, we have that $a < n(k-j+1)-k+j$. Thus, the homological degree of $x \otimes y$ is less than $(k-j+1)n-k+j+(n-1)(j)=n(k+1)-k$. Thus $(T \times \mr{id})_*$ is surjective for $i \geq n(k+1)-k$ with $\F_p$ coefficients. By Theorem \ref{injective}, stabilization maps are always injective on homology. Hence it is an isomorphism for $i \geq n(k+1)-k$.

We now consider the map $T \colon \R^n \times \G_k^j \m \G_{k+1}^{j}$. For $j>k+1$, both spaces are empty and so the map induces an isomorphism on compactly supported cohomology. We now proceed via downward induction on $j$. Suppose the proposition holds for $j+1$. Note that $(\R^n \times \G_k^j) \backslash (\R^n \times \G_k^{j+1})$ is homeomorphic to $\R^n \times C_{k-j}(M \backslash X) \times C_{j}(X)$ and $\G_{k+1}^{j} \backslash  \G_{k+1}^{j+1}$ is homeomorphic to $C_{k-j+1}(M \backslash X) \times C_{j}(X)$. Consider the long exact sequences of Proposition \ref{exact} associated to the inclusions $ \R^n \times \G_k^{j+1} \subset \R^n \times \G_k^j $ and   $  \G_{k+1}^{j+1} \subset \G_{k+1}^{j} $. The stabilization map induces a map between these long exact sequences:

\[\begin{tikzcd}\ldots  \rar &[-10pt] H^i_c( \R^n \times C_{k-j}(M \backslash X) \times C_{j}(X) ;\F_p)  \dar{T} \rar &[-5pt] H^i_c( \R^n \times \G_k^j ;\F_p)  \ar[d]_T \ar[r] &[-5pt] H^i_c( \R^n \times \G_k^{j+1};\F_p)   \dar{T \times \mr{id}} \\
\ldots \rar & H^i_c( C_{k-j+1}(M \backslash X) \times C_{j}(X);\F_p) \rar &H^i_c( \G_{k+1}^{j};\F_p)   \rar & H^i_c( \G_{k+1}^{j+1} ;\F_p).\end{tikzcd}\]  The induction step follows by our induction hypothesis, the previous paragraph and the five lemma. Remark that in the case $i = n(k+1)-k$ the five lemma only permits us to prove a surjection.
\end{proof}


We can now prove the main theorem. 

\begin{proof}[Proof of Theorem \ref{main}] Suppose that $M$ is orientable, even dimensional and the interior of a connected manifold with a finite handle decomposition with only one $0$-handle. Note that Proposition \ref{dual} follows from the $j=0$ case of Proposition \ref{induction}.  Fix an odd prime $p$. By Proposition \ref{dual} and Poincar\'e duality, we have that \[t_* \colon H_i(C_k(M);\F_p) \m H_i(C_{k+1}(M);\F_p)\] is an isomorphism provided $i \leq k$. Let $D_k$ denote the mapping cone of $t \colon C_k(M) \m C_{k+1}(M)$. Since $t_*$ is injective, we have a short exact sequence \[ 0 \m  H_i(C_k(M);\F_p) \overset{t_*}{\m} H_i(C_{k+1};\F_p) \m \tilde{H}_i(D_k;\F_p) \m 0.\] Thus $\tilde{H}_i(D_k;\F_p)$ vanishes for $i \leq k$. View $\F_p$ as a $\Z[1/2]$-module and note that $\Z[1/2]$ is a principle ideal domain. By the universal coefficient theorem, $H_i(C_k(M);\Z[1/2]) \otimes_{\Z[1/2]} \F_p$ injects into $H_i(C_k(M);\F_p)$. This shows that $\tilde{H}_i(D_k;\Z[1/2]) \otimes_{\Z[1/2]} \F_p \cong 0$ for $i \leq k$. By Lemma \ref{lem_homfg}, $H_i(C_{k+1}(M);\Z[1/2])$ is a finitely generated $\Z[1/2]$-module. Since $\tilde{H}_i(D_k;\Z[1/2])$ is a quotient of $H_i(C_{k+1}(M);\Z[1/2])$, it is also a  finitely generated $\Z[1/2]$-module. By the structure theorem for finitely generated $\Z[1/2]$-modules, we see that a finitely generated $\Z[1/2]$-module $A$ is isomorphic to $0$ if and only if $A \otimes_{\Z[1/2]} \F_p \cong 0$ for all odd primes $p$. Thus,  $\tilde{H}_i(D_k;\Z[1/2]) \cong 0$  for $i \leq k$. This shows that $t_* \colon H_i(C_k(M);\Z[1/2]) \m H_i(C_{k+1}(M);\Z[1/2])$ is an isomorphism for $i \leq k$. 

If $M$ is odd dimensional or not orientable, but still the interior of a connected manifold admitting a finite handle decomposition with only one $0$-handle, one modifies the proof by using compactly supported cohomology with coefficients in the orientation systems of the configuration spaces.

Now only assume that $M$ is non-compact and of dimension at least $3$. By Lemma \ref{lem_stabemb}, $M$ has an exhaustion by compact manifolds $\bar{M_i}$ admitting finite handle decompositions with a single 0-handle and this exhaustion can be taken to be compatible with the stabilization maps. We have already established Theorem \ref{main} for the $M_i$. Since homology takes exhaustions by nested open subsets to colimits, one gets a colimit of isomorphisms in the desired range, which is itself an isomorphism.
\end{proof}

We end with two remarks on some further consequences of this proof.

\begin{remark} If the codimension of $X$ is greater than $1$, one can further increase the homological stability slope. Assume $M$ has a subspace $X$ of codimension $q$ with $M \backslash X$ homeomorphic to $\R^n$. Fix an odd prime $p$ and assume $q>1$. In this case, the stability slope with $\F_p$ coefficients is the minimum of $(n-1)/2$ and $(2(p-1)-1)/p$. Note that $(2(p-1)-1)/p<2$. With $\Q$ coefficients,  the stability slope increases to the minimum of $q$ and $(n-1)/2$. Examples of manifolds with such a subspace include $S^q \times \R^{n-q}$. \end{remark}

\begin{remark}

In \cite{palmertwisted}, Palmer proves homological stability for configuration spaces with twisted coefficients. In Remark 1.5 of \cite{palmertwisted} he notes that if one works with twisted systems of $\Q$-vector spaces, the stability slope also increases from $1/2$ to $1$. A similar result is true if one works with twisted systems of $\Z[1/2]$-modules.\end{remark}

\bibliography{cell}{}

\def\cprime{$'$}
\begin{thebibliography}{GKRW}

\bibitem[AF15]{Fr2}
David Ayala and John Francis.
\newblock Factorization homology of topological manifolds.
\newblock {\em J. Topol.}, 8(4):1045--1084, 2015.

\bibitem[Arn70]{Ar}
V.~I. Arnol{\cprime}d.
\newblock Certain topological invariants of algebraic functions.
\newblock {\em Trudy Moskov. Mat. Ob\v s\v c.}, 21:27--46, 1970.

\bibitem[Bre97]{bredonsheaf}
Glen~E. Bredon.
\newblock {\em Sheaf theory}, volume 170 of {\em Graduate Texts in
  Mathematics}.
\newblock Springer-Verlag, New York, second edition, 1997.

\bibitem[Bro60]{Br}
William Browder.
\newblock Homology operations and loop spaces.
\newblock {\em Illinois J. Math.}, 4:347--357, 1960.

\bibitem[Chu12]{Ch}
Thomas Church.
\newblock Homological stability for configuration spaces of manifolds.
\newblock {\em Invent. Math.}, 188(2):465--504, 2012.

\bibitem[CLM76]{CLM}
Frederick~R. Cohen, Thomas~J. Lada, and J.~Peter May.
\newblock {\em The homology of iterated loop spaces}.
\newblock Lecture Notes in Mathematics, Vol. 533. Springer-Verlag, Berlin,
  1976.

\bibitem[CP15]{palmercantero}
Federico Cantero and Martin Palmer.
\newblock On homological stability for configuration spaces on closed
  background manifolds.
\newblock {\em Doc. Math.}, 20:753--805, 2015.

\bibitem[DL62]{DL}
Eldon Dyer and R.~K. Lashof.
\newblock Homology of iterated loop spaces.
\newblock {\em Amer. J. Math.}, 84:35--88, 1962.

\bibitem[GKRW]{GKRW1}
S{\o}ren Galatius, Alexander Kupers, and Oscar Randal-Williams.
\newblock Cellular {$E_k$}-algebras.
\newblock \url{https://arxiv.org/abs/1805.07184}.

\bibitem[Ive86]{iversen}
Birger Iversen.
\newblock {\em Cohomology of sheaves}.
\newblock Universitext. Springer-Verlag, Berlin, 1986.

\bibitem[KA56]{KA}
Tatsuji Kudo and Sh{\^o}r{\^o} Araki.
\newblock Topology of {$H_n$}-spaces and {$H$}-squaring operations.
\newblock {\em Mem. Fac. Sci. Ky\=usy\=u Univ. Ser. A.}, 10:85--120, 1956.

\bibitem[KM18]{kupersmillerencell}
Alexander Kupers and Jeremy Miller.
\newblock {$E_n$}-cell attachments and a local-to-global principle for
  homological stability.
\newblock {\em Math. Ann.}, 370(1-2):209--269, 2018.

\bibitem[KS77]{kirbysiebenmann}
Robion~C. Kirby and Laurence~C. Siebenmann.
\newblock {\em Foundational essays on topological manifolds, smoothings, and
  triangulations}.
\newblock Princeton University Press, Princeton, N.J.; University of Tokyo
  Press, Tokyo, 1977.
\newblock With notes by John Milnor and Michael Atiyah, Annals of Mathematics
  Studies, No. 88.

\bibitem[Las71]{lashoficm}
R.~Lashof.
\newblock The immersion approach to triangulation and smoothing.
\newblock pages 91--93, 1971.

\bibitem[McD75]{Mc1}
Dusa McDuff.
\newblock Configuration spaces of positive and negative particles.
\newblock {\em Topology}, 14:91--107, 1975.

\bibitem[Mil59]{milnorcw}
John Milnor.
\newblock On spaces having the homotopy type of a {${\rm CW}$}-complex.
\newblock {\em Trans. Amer. Math. Soc.}, 90:272--280, 1959.

\bibitem[Moi52]{moise}
Edwin~E. Moise.
\newblock Affine structures in {$3$}-manifolds. {V}. {T}he triangulation
  theorem and {H}auptvermutung.
\newblock {\em Ann. of Math. (2)}, 56:96--114, 1952.

\bibitem[Nak57]{N2}
Minoru Nakaoka.
\newblock Cohomology of symmetric products.
\newblock {\em J. Inst. Polytech. Osaka City Univ. Ser. A}, 8:121--145, 1957.

\bibitem[Pal18]{palmertwisted}
Martin Palmer.
\newblock Twisted homological stability for configuration spaces.
\newblock {\em Homology Homotopy Appl.}, 20(2):145--178, 2018.

\bibitem[Qui82]{quinnendsiii}
Frank Quinn.
\newblock Ends of maps. {III}. {D}imensions {$4$} and {$5$}.
\newblock {\em J. Differential Geom.}, 17(3):503--521, 1982.

\bibitem[Qui84]{quinnsmooth}
Frank Quinn.
\newblock Smooth structures on {$4$}-manifolds.
\newblock In {\em Four-manifold theory ({D}urham, {N}.{H}., 1982)}, volume~35
  of {\em Contemp. Math.}, pages 473--479. Amer. Math. Soc., Providence, RI,
  1984.

\bibitem[RW13]{RW}
Oscar Randal-Williams.
\newblock Homological stability for unordered configuration spaces.
\newblock {\em Quarterly Journal of Mathematics}, (64 (1)):303--326, 2013.

\bibitem[Seg79]{Se}
Graeme Segal.
\newblock The topology of spaces of rational functions.
\newblock {\em Acta Math.}, 143(1-2):39--72, 1979.

\end{thebibliography}
\bibliographystyle{alpha}

\end{document}